\numberwithin{equation}{section}
\theoremstyle{plain}\newtheorem{theorem}{Theorem}[section]
\newtheorem{lemma}[theorem]{Lemma}\newtheorem{corollary}[theorem]{Corollary}\theoremstyle{definition}\newtheorem{remark}[theorem]{Remark}
\DeclareMathOperator{\Widim}{\mathrm{Widim}}\DeclareMathOperator{\mdim}{\mathrm{mdim}}\DeclareMathOperator{\supp}{\mathrm{supp}}
\begin{document}
\title[Mean dimension of Bernstein spaces and universal real flows]{Mean dimension of Bernstein spaces\\and universal real flows}
\author[L. Jin]{Lei Jin}
\address{Lei Jin: Center for Mathematical Modeling, University of Chile and UMI 2807 - CNRS}
\email{jinleim@impan.pl}
\author[Y. Qiao]{Yixiao Qiao}
\address{Yixiao Qiao: School of Mathematical Sciences, South China Normal University, Guangzhou, Guangdong 510631, China}
\email{yxqiao@mail.ustc.edu.cn}
\author[S. Tu]{Siming Tu}
\address{Siming Tu: School of Mathematics (Zhuhai), Sun Yat-sen University, Zhuhai, Guangdong 519082, China}
\email{tusiming3@mail.sysu.edu.cn}
\subjclass[2010]{37B05}\keywords{Hilbert cube; Bernstein space; Mean dimension; Universal real flow; Equivariant embedding.}
\begin{abstract}
We study the action of translation on the spaces of uniformly bounded continuous functions on the real line which are uniformly band-limited in a compact interval. We prove that two intervals themselves will decide if two spaces are topologically conjugate, while the length of an interval tells the mean dimension of a space. We also investigate universal real flows. We construct a sequence of compact invariant subsets of a space consisting of uniformly bounded smooth one-Lipschitz functions on the real line, which have mean dimension equal to one, such that all real flows can be equivariantly embedded in the translation on their product space. Moreover, we show that the countable self-product of any among them does not satisfy such a universal property. This, on the one hand, presents a more reasonable choice of a universal real flow with a view towards mean dimension, and on the other hand, clarifies a seemingly plausible impression; meanwhile, it refines the previously known results in this direction. Our proof goes through an approach of harmonic analysis. Furthermore, both the universal space that we provide and an embedding mapping which we build for any real flow are explicit.
\end{abstract}
\maketitle

\medskip

\section{Main results}
This paper is devoted to a study of the translation action on \textit{Bernstein spaces} and an alternative \textit{universal real flow} with a view towards \textit{mean dimension theory}. By a \textbf{real flow} (or an $\mathbb{R}$-action) we understand a triple $(X,\mathbb{R},T)$, where $X$ is a compact metric space and $$T:\mathbb{R}\times X\to X,\quad(t,x)\mapsto T_tx$$ is a continuous mapping satisfying: $$T_0x=x,\quad T_{r+s}x=T_rT_sx,\quad\forall x\in X,\;\forall r,s\in\mathbb{R}.$$ For two real flows $(X,\mathbb{R},T)$ and $(Y,\mathbb{R},S)$ we say that $(Y,\mathbb{R},S)$ can be \textbf{embedded} in $(X,\mathbb{R},T)$ if there is an equivariant topological embedding $\phi:Y\to X$, namely a homeomorphism $\phi$ of $Y$ into $X$ satisfying $$\phi(S_ty)=T_t\phi(y),\quad\forall t\in\mathbb{R},\;\forall y\in Y;$$ if in addition, such an equivariant mapping $\phi$ is a homeomorphism of $Y$ onto $X$, then $(X,\mathbb{R},T)$ and $(Y,\mathbb{R},S)$ are said to be \textbf{topologically conjugate} (to which some researchers may prefer the terminology ``isomorphic''). A real flow $(X,\mathbb{R},T)$ is called \textbf{universal} if all real flows can be embedded in $(X,\mathbb{R},T)$.

These terminologies for $\mathbb{R}$-actions may be applied similarly to \textit{discrete flows}, i.e. $\mathbb{Z}$-actions. A standard universal $\mathbb{Z}$-action is the \textit{Hilbert cube} $([-1,1]^\mathbb{N})^\mathbb{Z}$ under the $\mathbb{Z}$-translation: $$(x_k)_{k\in\mathbb{Z}}\longmapsto(x_{k+1})_{k\in\mathbb{Z}},\quad\forall x_k\in[-1,1]^\mathbb{N}.$$ Note that the Hilbert cube $([-1,1]^\mathbb{N})^\mathbb{Z}$ is a compact metric space. For $\mathbb{R}$-actions, an analogue of the Hilbert cube $([-1,1]^\mathbb{N})^\mathbb{Z}=([-1,1]^\mathbb{Z})^\mathbb{N}$ is the function space $C(\mathbb{R})^\mathbb{N}$, where $C(\mathbb{R})$ denotes the space of continuous functions $f:\mathbb{R}\to[-1,1]$ endowed with the topology of uniform convergence on compact subsets of $\mathbb{R}$, given by the distance:
\begin{equation}\label{distance}
D(f,g)=\sum_{n=1}^\infty\frac{|\!|f-g|\!|_{L^\infty([-n,n])}}{2^n}\quad\quad(f,g\in C(\mathbb{R})).
\end{equation}
Let the group $\mathbb{R}$ act on $C(\mathbb{R})$ continuously by the translation
\begin{equation}\label{translation}
\sigma:\mathbb{R}\times C(\mathbb{R})\to C(\mathbb{R}),\quad\quad(t,f(\cdot))\mapsto f(\cdot+t).
\end{equation}
In the same way as in $\mathbb{Z}$-actions, we can embed all real flows in the translation on the product space $C(\mathbb{R})^\mathbb{N}$ naturally. Unfortunately, if we try to consider the translation on $C(\mathbb{R})^\mathbb{N}$ as a ``universal real flow'' then there is a problem: The space $C(\mathbb{R})^\mathbb{N}$ is not compact, nor locally compact. So it is not a ``real flow'' in the definition.

\medskip

We expect to find a universal real flow as simple as possible. Nevertheless, it would be less interesting if a universal space is ``larger'' than the function space $C(\mathbb{R})^\mathbb{N}$. This poses the following question:
\begin{itemize}\item
Is there an ``explicit'' \textit{compact} invariant subset of $C(\mathbb{R})^\mathbb{N}$ that is universal?
\end{itemize}
Here ``explicitness'' means that we may characterize all elements in a chosen space easily. Answering the above question positively, Gutman and Jin \cite{GJ} successfully constructed a countable product of compact invariant subsets of $C(\mathbb{R})$, which is universal under the translation.

\medskip

To state this result in a precise way, we briefly recall some necessary notions and results in Fourier analysis. A \textbf{rapidly decreasing function} is an infinitely differentiable function $f$ on $\mathbb{R}$ satisfying $$\lim_{|t|\to+\infty}t^nf^{(j)}(t)=0,\quad\forall n,j\in\mathbb{N}.$$ A \textbf{tempered distribution} on $\mathbb{R}$ is a continuous linear functional on the space of all rapidly decreasing functions equipped with the topology given by a family of seminorms as follows: $$|\!|f|\!|_{j,n}=\sup_{t\in\mathbb{R}}|t^nf^{(j)}(t)|\quad(j,n\in\mathbb{N}).$$ The tempered distributions include in particular bounded continuous functions. For rapidly decreasing functions $f:\mathbb{R}\to\mathbb{C}$ the definition of the \textbf{Fourier transforms} of $f$ is given by $$\mathcal{F}(f)(\xi)=\int_{-\infty}^{+\infty}e^{-2\pi\sqrt{-1}t\xi}f(t)dt,\quad\overline{\mathcal{F}}(f)(t)=\int_{-\infty}^{+\infty}e^{2\pi\sqrt{-1}t\xi}f(\xi)d\xi.$$ The operators $\mathcal{F}$ and $\overline{\mathcal{F}}$ can be extended to tempered distributions in a standard way (for details we refer to \cite[Chapter 7]{Schwartz}).

\medskip

Let $I$ be a compact subset of $\mathbb{R}$. A bounded continuous function $f:\mathbb{R}\to\mathbb{C}$ is \textbf{band-limited} in $I$ if $\supp(\mathcal{F}(f))\subset I$, meaning that $\langle\mathcal{F}(f),g\rangle=0$ for all rapidly decreasing functions $g:\mathbb{R}\to\mathbb{C}$ with $\supp(g)\cap I=\emptyset$. We denote by $\mathcal{B}^\mathbb{C}(I)$ (resp. $\mathcal{B}(I)$) the set of continuous functions $f:\mathbb{R}\to\mathbb{C}$ (resp. $f:\mathbb{R}\to\mathbb{R}$) band-limited in $I$ with $|\!|f|\!|_{L^\infty(\mathbb{R})}\le1$. Clearly, both $\mathcal{B}^\mathbb{C}(I)$ and $\mathcal{B}(I)$ are invariant under the translation $\sigma$ defined in \eqref{translation}. An important and nontrivial fact \cite[Lemma 2.3]{GT}\cite[Chapter 7, Section 4]{Schwartz} is that if $I\subset\mathbb{R}$ is compact then $\mathcal{B}^\mathbb{C}(I)$ and $\mathcal{B}(I)$ are compact metric spaces with respect to the distance $D$ given in \eqref{distance} which coincides with the standard topology of tempered distributions. Thus, $(\mathcal{B}^\mathbb{C}(I),\mathbb{R},\sigma)$ and $(\mathcal{B}(I),\mathbb{R},\sigma)$ become real flows.

\begin{remark}
The distance $D$ defined in \eqref{distance} and the translation $\sigma$ defined in \eqref{translation} should be understood a distance and an action of $\mathbb{R}$, respectively, on the space of continuous functions $f:\mathbb{R}\to\mathbb{C}$ (which is larger than $C(\mathbb{R})$). We do not change the notation here as it does not cause any confusion.
\end{remark}

\medskip

\begin{theorem}[{\cite[Theorem 1.2]{GJ}}]\label{gj}
Under the translation $\sigma$ the space $\prod_{n\in\mathbb{N}}\mathcal{B}([-n,n])$ is universal.
\end{theorem}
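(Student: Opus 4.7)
The plan is to construct, for an arbitrary real flow $(X,\mathbb{R},T)$, an explicit equivariant topological embedding $\Phi: X \to \prod_{n \in \mathbb{N}} \mathcal{B}([-n,n])$. The natural first attempt is $x \mapsto (t \mapsto h_k(T_t x))_{k \in \mathbb{N}}$ for a countable dense sequence $\{h_k\}$ in $C(X,[-1,1])$ that separates points; this yields an equivariant embedding into $C(\mathbb{R})^{\mathbb{N}}$, but the orbit functions $t \mapsto h_k(T_t x)$ need not be band-limited. The idea is to low-pass filter these orbit functions by convolving them with a family of Schwartz kernels $\phi_n$ whose Fourier transforms are compactly supported in $[-n,n]$, while ensuring that the full family of convolved functions still recovers enough information to separate points of $X$.

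Concretely, I would fix a smooth even bump $\eta \in C_c^\infty(\mathbb{R})$ with $\eta \equiv 1$ on $[-1/2, 1/2]$ and $\supp(\eta) \subset [-1,1]$, set $\hat{\phi}_n(\xi) = \eta(\xi/n)$ so that $\phi_n$ is a real Schwartz function with $\|\phi_n\|_{L^1}$ independent of $n$, and choose a surjection $\kappa:\mathbb{N} \to \mathbb{N}$ whose fibers are all infinite. I then define
$$\Phi_n(x)(t) = c \int_{\mathbb{R}} h_{\kappa(n)}(T_s x)\,\phi_n(t-s)\,ds, \qquad \Phi(x) = (\Phi_n(x))_{n \in \mathbb{N}},$$
where $c>0$ is a uniform normalization ensuring $\|\Phi_n(x)\|_{L^\infty(\mathbb{R})} \le 1$. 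Each $\Phi_n(x)$ is real-valued, bounded by $1$, and has Fourier transform $\mathcal{F}(h_{\kappa(n)}(T_\cdot x)) \cdot \hat{\phi}_n$ supported in $[-n,n]$, so $\Phi_n(x) \in \mathcal{B}([-n,n])$. Equivariance $\Phi_n(T_s x) = \sigma_s \Phi_n(x)$ is a direct change-of-variables in the convolution, and continuity of $\Phi_n$ follows from uniform continuity of $h_{\kappa(n)}$ on $X$, joint continuity of the flow, and dominated convergence applied to the convolution.

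The main obstacle, and the reason for the surjection $\kappa$ with infinite fibers, is injectivity. If $\Phi(x) = \Phi(y)$, then for every $n$ the bounded continuous function $g_n(\cdot) := h_{\kappa(n)}(T_\cdot x) - h_{\kappa(n)}(T_\cdot y)$ satisfies $g_n * \phi_n \equiv 0$, so $\mathcal{F}(g_n) \cdot \hat{\phi}_n = 0$ as a tempered distribution, which by the choice $\hat{\phi}_n \equiv 1$ on $[-n/2, n/2]$ means $\mathcal{F}(g_n)$ vanishes on $(-n/2, n/2)$. Fixing $k \in \mathbb{N}$ and letting $n$ range over the infinite set $\kappa^{-1}(k)$, I conclude that $\mathcal{F}(h_k(T_\cdot x) - h_k(T_\cdot y)) = 0$ as a tempered distribution, hence $h_k(T_\cdot x) \equiv h_k(T_\cdot y)$ on $\mathbb{R}$, and in particular $h_k(x) = h_k(y)$. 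Since $\{h_k\}$ separates points, $x=y$. Because $X$ is compact, the continuous equivariant injection $\Phi$ is automatically a topological embedding, completing the proof.
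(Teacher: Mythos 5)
Your argument is correct: the low-pass filters $\phi_n$ with $\widehat{\phi_n}=\eta(\cdot/n)$ land each coordinate in $\mathcal{B}([-n,n])$ with a uniform $L^1$ normalization, equivariance and continuity are routine (for continuity you do need to split the convolution integral into $[-A,A]$ and its complement, using the uniform bound $|h_k|\le 1$ and the smallness of $\int_{|s|>A}|\phi_n|$, since uniform convergence on compacta alone does not pass through an integral over all of $\mathbb{R}$ -- but this is exactly the standard estimate), and the injectivity step is sound: $\mathcal{F}(g)\cdot\widehat{\phi_n}=0$ with $\widehat{\phi_n}\equiv 1$ on $[-n/2,n/2]$ forces $\mathcal{F}(g)$ to vanish there, and the infinite fibers of $\kappa$ let the cutoff go to infinity, killing $\mathcal{F}(g)$ entirely. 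Note, however, that the paper does not reprove this statement; it cites \cite{GJ} and instead proves the stronger Theorem \ref{main} in Section 5, and your mechanism is genuinely different from that one. In Section 5 each orbit function is convolved with \emph{every} member of a family $\varphi_n=\overline{\mathcal{F}}(\xi_n)$, where the $\xi_n$ are tent functions of \emph{fixed} width forming a partition of unity of the frequency line, and injectivity is recovered by summing the bands: $(f-g)\ast\sum_n\varphi_n=(f-g)\ast\delta_0=f-g$. That band-decomposition is what allows every factor of the universal space to be uniformly band-limited (hence of mean dimension $1$ after rescaling), which is the whole point of Theorems \ref{submain} and \ref{main}; your single growing cutoff per coordinate is simpler and entirely adequate for the target $\prod_n\mathcal{B}([-n,n])$, whose bands grow without bound, but it cannot be adapted to a product of spaces with uniformly bounded spectra, precisely because your injectivity argument needs $n\to\infty$ along each fiber of $\kappa$.
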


Although Theorem \ref{gj} provides an affirmative solution to the above question, it is not so satisfactory because in contrast to the Hilbert cube $([-1,1]^\mathbb{Z})^\mathbb{N}$, the universal space $\prod_{n\in\mathbb{N}}\mathcal{B}([-n,n])$ appearing in Theorem \ref{gj} is not a \textit{self-product}. To proceed, we would like to seek a universal space which is ``closer'' to the Hilbert cube $([-1,1]^\mathbb{Z})^\mathbb{N}$. Hence a natural question arises as follows:
\begin{itemize}\item
Is there an explicit \textit{compact} invariant subset $F$ of $C(\mathbb{R})$ such that $F^\mathbb{N}$ is universal?
\end{itemize}
Under this motivation, Jin and Tu \cite{JT} found that the one-Lipschitz function space is a solution.

\medskip

Formally, we let $L(\mathbb{R})$ be the set of all functions $f:\mathbb{R}\to[-1,1]$ with the following property: $$|f(s)-f(t)|\le|s-t|,\quad\forall s,t\in\mathbb{R}.$$ Obviously, $L(\mathbb{R})$ is an invariant subset of $C(\mathbb{R})$ under the translation $\sigma$ in \eqref{translation}. Moreover, by the Arzela--Ascoli theorem, $L(\mathbb{R})$ is a compact metric space with respect to the distance $D$ in \eqref{distance}.

\begin{theorem}[{\cite[Theorem 1.2]{JT}}]\label{jt}
Under the translation $\sigma$ the space $L(\mathbb{R})^\mathbb{N}$ is universal.
\end{theorem}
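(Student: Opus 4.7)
The plan is to deduce Theorem~\ref{jt} from Theorem~\ref{gj} by a coordinate-wise rescaling that realizes each factor $\mathcal{B}([-n,n])$ inside $L(\mathbb{R})$. Everything hinges on one classical ingredient from harmonic analysis, namely \emph{Bernstein's inequality}: every bounded continuous function $f$ on $\mathbb{R}$ with $\supp(\mathcal{F}(f)) \subset [-n,n]$ extends to an entire function and satisfies $\|f'\|_{L^\infty(\mathbb{R})} \le 2\pi n\,\|f\|_{L^\infty(\mathbb{R})}$.

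Granted this, for each integer $n \ge 1$ I set $\iota_n(f) := (2\pi n)^{-1} f$, viewed as a map $\iota_n : \mathcal{B}([-n,n]) \to L(\mathbb{R})$. Since $\|f\|_{L^\infty(\mathbb{R})}\le 1$, Bernstein's inequality gives $\|(\iota_n(f))'\|_{L^\infty(\mathbb{R})} \le 1$, so $\iota_n(f)$ is one-Lipschitz; and $\|\iota_n(f)\|_{L^\infty(\mathbb{R})}\le(2\pi n)^{-1}\le 1$, so the values stay in $[-1,1]$. Hence $\iota_n(f)\in L(\mathbb{R})$. The map $\iota_n$ is linear, therefore continuous in the distance $D$; it is manifestly injective and commutes with the translation $\sigma$; and since $\mathcal{B}([-n,n])$ is compact, continuous injectivity upgrades to an equivariant topological embedding. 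The trivial factor $\mathcal{B}(\{0\})$ (real constants in $[-1,1]$) already sits inside $L(\mathbb{R})$ and requires no rescaling. Assembling the coordinate-wise product $\iota := \prod_{n\in\mathbb{N}}\iota_n$ yields an equivariant topological embedding $\prod_{n\in\mathbb{N}}\mathcal{B}([-n,n]) \hookrightarrow L(\mathbb{R})^{\mathbb{N}}$, and composing with the universal embedding furnished by Theorem~\ref{gj} embeds every real flow equivariantly into $L(\mathbb{R})^{\mathbb{N}}$.

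I expect no genuine obstacle beyond invoking Bernstein's inequality with the correct constant; once the bound $\|f'\|_{L^\infty(\mathbb{R})} \le 2\pi n$ is in hand, the construction is entirely formal. The only mild technicality is to confirm that the topology on $\mathcal{B}([-n,n])$ induced by $D$ (equivalently, by tempered distributions) is preserved by the linear rescaling $\iota_n$, which is immediate from linearity and boundedness of $\iota_n$ in the $L^\infty$-norm on every compact interval.
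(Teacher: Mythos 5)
Your argument is correct. The key facts all check out: by the generalized Paley--Wiener theorem (quoted in Section 6), every $f\in\mathcal{B}([-n,n])$ extends to an entire function of exponential type $2\pi n$ bounded on $\mathbb{R}$, so Bernstein's inequality does give $\|f'\|_{L^\infty(\mathbb{R})}\le 2\pi n$, and the rescaling $\iota_n(f)=(2\pi n)^{-1}f$ lands in $L(\mathbb{R})$, satisfies $D(\iota_n f,\iota_n g)=(2\pi n)^{-1}D(f,g)$ (hence is continuous and injective), commutes with $\sigma$, and upgrades to an embedding by compactness; the coordinate-wise product then embeds $\prod_n\mathcal{B}([-n,n])$ equivariantly into $L(\mathbb{R})^{\mathbb{N}}$, and Theorem~\ref{gj} finishes the job. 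Note, however, that the paper does not prove this statement at all --- it imports it from \cite{JT} --- and the route taken there (as the remark at the end of Section 3 indicates) is of a different nature: a Bebutov--Kakutani-type construction in the spirit of \cite{GJT}, which characterizes embeddability into $L(\mathbb{R})^N$ by the condition that the fixed-point set embeds into $[0,1]^N$, the case $N=\infty$ being vacuous. That approach is self-contained relative to the Bebutov--Kakutani machinery and yields the sharper ``if and only if'' criterion; yours is shorter and more conceptual but is logically downstream of Theorem~\ref{gj}, whose proof is itself nontrivial, so it trades self-containedness for brevity. Both are legitimate; your derivation also has the pleasant feature of making explicit the relationship between the two universal spaces of Theorems~\ref{gj} and~\ref{jt}, which is very much in the spirit of this paper.
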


\medskip

However, we are still not satisfied with Theorem \ref{jt} in spite of the fact that $L(\mathbb{R})^\mathbb{N}$ is a countable self-product of $L(\mathbb{R})\subset C(\mathbb{R})$, as a deeper observation reveals a more serious problem: The mean dimension of $L(\mathbb{R})$ (under the translation $\sigma$) is $+\infty$, while in the Hilbert cube $([-1,1]^\mathbb{Z})^\mathbb{N}$ the mean dimension of $[-1,1]^\mathbb{Z}$ (under the $\mathbb{Z}$-translation) is $1$, a \textit{finite} number. We shall have a more detailed explanation for mean dimension in Section 2, Section 3 and Section 4. This is an essential difference between $L(\mathbb{R})^\mathbb{N}$ and the Hilbert cube $([-1,1]^\mathbb{Z})^\mathbb{N}$. From this point of view, the ``size'' of the space $L(\mathbb{R})$ that we selected in the above solution is indeed too ``large''. Thus, we require a better candidate substantially. More precisely, we put a further problem:
\begin{itemize}\item
Is there an explicit \textit{compact} invariant subset $F\subset C(\mathbb{R})$ of \textit{finite} mean dimension such that $F^\mathbb{N}$ is universal?
\end{itemize}

\medskip

This problem is temporarily beyond the authors' reach. The aim of the present paper is to give a positive answer to a slightly weaker statement which strengthens Theorem \ref{gj} with a closer analogue of the Hilbert cube $([-1,1]^\mathbb{Z})^\mathbb{N}$ and a more direct construction of an embedding mapping, where we choose the spaces $\mathcal{B}([-n/3-1/2,-n/3]\cup[n/3,n/3+1/2])$ (for nonnegative integers $n$) because their mean dimension are equal to $1$, the same as the mean dimension of $[-1,1]^\mathbb{Z}$. We notice that the mean dimension of those spaces in Theorem \ref{gj} are not uniformly bounded by a finite number.
\begin{theorem}\label{submain}
Under the translation $\sigma$ the space $$\prod_{m=0}^{+\infty}\prod_{n=0}^m\mathcal{B}([-n/3-1/2,-n/3]\cup[n/3,n/3+1/2])$$ is universal.
\end{theorem}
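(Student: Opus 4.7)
The strategy is to reduce Theorem \ref{submain} to Theorem \ref{gj}. Since every real flow embeds equivariantly into $\prod_{n\in\mathbb{N}}\mathcal{B}([-n,n])$, it suffices to equivariantly embed each factor $\mathcal{B}([-n,n])$ into a finite product of spaces of the form $\mathcal{B}(I_k)$, where $I_k:=[-k/3-1/2,-k/3]\cup[k/3,k/3+1/2]$. These finite products will fit into the outer factors of $\prod_{m=0}^{+\infty}\prod_{n=0}^m\mathcal{B}(I_n)$ by choosing, for each $n$, the outer index $m=3n$ to carry the image of the embedding and sending all unused outer factors to the constant function $0$.

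The Fourier-analytic heart of the argument is a smooth partition of unity $\{\Psi_k\}_{k\ge0}$ on $\mathbb{R}$ subordinate to $\{I_k\}_{k\ge0}$: each $\Psi_k$ is real, even, smooth, with $\supp(\Psi_k)\subset I_k$ and $\sum_{k\ge0}\Psi_k\equiv1$. Such a family exists because the half-intervals $[k/3,k/3+1/2]$ overlap consecutively on $[0,+\infty)$ (since $(k+1)/3<k/3+1/2$) and admit a smooth even extension across the origin; for $k\ge1$ the functions $\Psi_k$ can be realized as $\Psi_k(\xi)=\psi(\xi-k/3)+\psi(-\xi-k/3)$ for one fixed smooth bump $\psi$ compactly supported in $[0,1/2]$. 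Because each $\Psi_k$ is Schwartz and because modulation does not change the $L^1$ norm, one has the uniform bound $\sup_{k\ge0}\|\check{\Psi}_k\|_{L^1(\mathbb{R})}\le C$ for some absolute $C>0$, where $\check{\Psi}_k:=\overline{\mathcal{F}}(\Psi_k)$. For $f\in\mathcal{B}([-n,n])$ I then define $f_k:=\check{\Psi}_k*f$, equivalently $\mathcal{F}(f_k)=\Psi_k\,\mathcal{F}(f)$ as tempered distributions. The following properties are immediate: (i) $\supp(\mathcal{F}(f_k))\subset I_k$; (ii) $f_k$ is real-valued, because $\Psi_k$ is real and even while $\mathcal{F}(f)$ is Hermitian; (iii) $\|f_k\|_{L^\infty(\mathbb{R})}\le C\|f\|_{L^\infty(\mathbb{R})}\le C$; (iv) $f_k\equiv0$ whenever $I_k\cap[-n,n]=\emptyset$, i.e.\ for $k>3n$; (v) $\sum_{k=0}^{3n}f_k=f$.

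Consequently $C^{-1}f_k\in\mathcal{B}(I_k)$, and the map $\Phi_n:f\mapsto(C^{-1}f_0,\ldots,C^{-1}f_{3n})$ defines an $\mathbb{R}$-equivariant continuous injection $\mathcal{B}([-n,n])\to\prod_{k=0}^{3n}\mathcal{B}(I_k)$; since the domain is compact Hausdorff, it is automatically a topological embedding. Assembling the $\Phi_n$ as described in the first paragraph (place $\Phi_n$ into the outer factor indexed by $m=3n$, and send every other outer factor to the constant $0$) combines with Theorem \ref{gj} to produce an equivariant topological embedding of an arbitrary real flow into $\prod_{m=0}^{+\infty}\prod_{n=0}^m\mathcal{B}(I_n)$, proving Theorem \ref{submain}. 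The principal obstacle is the second step: arranging the partition of unity so that each $\Psi_k$ is simultaneously even (which is what secures real-valuedness of $f_k$ and keeps the image inside $\mathcal{B}(I_k)$ rather than $\mathcal{B}^{\mathbb{C}}(I_k)$) and, for $k\ge1$, built from symmetric frequency shifts of one fixed bump, since this last property is what yields the \emph{uniform} $L^1$ bound on $\check{\Psi}_k$, and hence the single universal rescaling by $C$ that makes every piece land inside $\mathcal{B}(I_k)$.
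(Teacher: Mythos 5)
Your proposal is correct, and it reaches Theorem \ref{submain} by a genuinely different (and for this particular statement, shorter) route than the paper. You treat Theorem \ref{gj} as a black box and then only need a band-splitting lemma: decompose each $\mathcal{B}([-n,n])$ via an \emph{even} frequency-side partition of unity $\{\Psi_k\}$ subordinate to the sets $I_k$, convolve with $\check{\Psi}_k$, and normalize by a uniform $L^1$ bound; since each $\Psi_k$ is real and even, every piece $f_k$ stays real-valued and the whole construction lives in $\mathcal{B}(I_k)$ from the start. The paper does not invoke Theorem \ref{gj} at all: it proves the stronger Theorem \ref{main} (with the additional intersection with $L(\mathbb{R})$, obtained by a further averaging step $f\mapsto\frac12\int_t^{t+q_j}f$) by a self-contained three-step construction $X\to C(\mathbb{R})^{\mathbb{N}}\to\prod\prod\mathcal{B}(\cdots)\to\prod\prod(\mathcal{B}(\cdots)\cap L(\mathbb{R}))$, whose middle step is the same analytic engine you use --- a partition of unity $\{\xi_n\}_{n\in\mathbb{Z}}$ in frequency, convolution with $\varphi_n=\overline{\mathcal{F}}(\xi_n)$, and normalization by $k_n=\|\varphi_n\|_{L^1}$ --- except that the paper uses one-sided tent functions, lands first in the complex spaces $\mathcal{B}^{\mathbb{C}}([(n-1)/2,(n+1)/2])$, and only afterwards splits into real and imaginary parts, and it applies the decomposition to \emph{arbitrary} elements of $C(\mathbb{R})$ (countably many bands per coordinate) rather than to already band-limited functions (finitely many bands, which is what lets you fit $\Phi_n$ exactly into the outer factor $m=3n$). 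What your approach buys is economy and the elegance of staying real throughout; what the paper's buys is the stronger conclusion of Theorem \ref{main}, a fully explicit embedding not mediated by the earlier result of \cite{GJ}, and uniformity of the method across all ratios $0<\alpha<\beta$. The only points you leave implicit --- the verification that $\supp(\Psi_0)\subset[-1/2,1/2]$ once $\Psi_0:=1-\sum_{k\ge1}\Psi_k$, the continuity of $f\mapsto\check{\Psi}_k*f$ for the compact-open topology on uniformly bounded families, and the fact that padding unused outer factors with the $\sigma$-fixed function $0$ preserves equivariance --- are all routine and are handled by arguments identical to those the paper spells out for its $\Phi_2$.
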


\medskip

Furthermore, we have the following refinement which unifies the previously known results in this direction. Since for any compact $I\subset\mathbb{R}$ both $\mathcal{B}(I)$ and $L(\mathbb{R})$ are compact metric spaces with respect to the distance $D$, their intersection $\mathcal{B}(I)\cap L(\mathbb{R})$ is a compact metric space as well. Thus, $\mathcal{B}(I)\cap L(\mathbb{R})$ becomes also a compact invariant subset of $C(\mathbb{R})$ under the translation $\sigma$.

\begin{theorem}[Main theorem 1]\label{main}
For any real numbers $0<\alpha<\beta$ the space $$\prod_{m=0}^{+\infty}\prod_{n=0}^m\left(\mathcal{B}([-n\alpha-\beta,-n\alpha]\cup[n\alpha,n\alpha+\beta])\cap L(\mathbb{R})\right)$$ is universal under the translation $\sigma$.
\end{theorem}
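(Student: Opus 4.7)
The plan is, given an arbitrary real flow $(X, \mathbb{R}, T)$, to construct an explicit continuous equivariant injection $\Phi$ from $X$ into the stated product space; since $X$ is compact, $\Phi$ is automatically a topological embedding. I would follow the general philosophy of \cite{GJ, JT}: fix once and for all a countable dense sequence $\{h_m\}_{m \geq 0}$ in $C(X, [-1, 1])$ (separable since $X$ is compact metric), and then record suitably smoothed orbit functions $t \mapsto h_m(T_t x)$ so that each coordinate lands in the prescribed space $\mathcal{B}(I_n) \cap L(\mathbb{R})$, where $I_n := [-n\alpha - \beta, -n\alpha] \cup [n\alpha, n\alpha + \beta]$.

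The smoothing is carried out in the Fourier domain. Because $\alpha < \beta$, consecutive intervals $[n\alpha, n\alpha + \beta]$ and $[(n+1)\alpha, (n+1)\alpha + \beta]$ overlap, so one can build a smooth nonnegative even partition of unity $\{\chi_n\}_{n \geq 0}$ with $\supp(\chi_n) \subset I_n$ and $\sum_n \chi_n \equiv 1$ on $\mathbb{R}$. Set $\phi_n := \overline{\mathcal{F}}(\chi_n)$, a real-valued rapidly decreasing function with $\mathcal{F}(\phi_n) = \chi_n$. I would then define the $(m, n)$-coordinate of $\Phi(x)$ to be
$$F_{m,n}(x)(t) := c_n \cdot (h_m(T_\cdot x) * \phi_n)(t), \qquad 0 \leq n \leq m,$$
where $c_n > 0$ is a small constant to be chosen. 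The Fourier support condition immediately gives $F_{m,n}(x) \in \mathcal{B}(I_n)$. The two remaining conditions, namely $\|F_{m,n}(x)\|_{L^\infty(\mathbb{R})} \leq 1$ and the one-Lipschitz property, constitute the new technical content compared with Theorem \ref{gj}. For the first, Young's inequality yields $\|h_m(T_\cdot x) * \phi_n\|_\infty \leq \|\phi_n\|_{L^1}$ uniformly in $m$ and $x$. For the second, Bernstein's inequality bounds the derivative of a function band-limited in $I_n$ by $2\pi(n\alpha + \beta)$ times its sup norm. Choosing $c_n := \min\!\bigl(\|\phi_n\|_{L^1}^{-1},\,(2\pi(n\alpha + \beta)\|\phi_n\|_{L^1})^{-1}\bigr)$ enforces both conditions simultaneously, so each $F_{m,n}(x)$ lies in $\mathcal{B}(I_n) \cap L(\mathbb{R})$. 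Equivariance $F_{m,n}(T_s x) = \sigma_s F_{m,n}(x)$ is a direct substitution; continuity of $x \mapsto F_{m,n}(x)$ in the distance $D$ follows from uniform-on-compacts convergence of the orbit functions coupled with $\phi_n \in L^1$.

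The subtler part, and what I expect to be the main obstacle, is injectivity, which is where the double-indexed shape of the product plays its role: each band $n$ appears paired with every $m \geq n$. Suppose $\Phi(x) = \Phi(y)$. Then for every fixed $n$ we have $h_m(T_\cdot x) * \phi_n = h_m(T_\cdot y) * \phi_n$ for all $m \geq n$. The density of $\{h_m\}_{m \geq n}$ in $C(X, [-1, 1])$ (only finitely many terms have been discarded) together with boundedness of convolution with $\phi_n \in L^1$ on $L^\infty$ upgrades this to
$$f(T_\cdot x) * \phi_n = f(T_\cdot y) * \phi_n, \qquad \forall\, f \in C(X, [-1, 1]),\; \forall\, n \geq 0.$$
Writing $g(t) := f(T_t x) - f(T_t y)$, which is bounded continuous and so a tempered distribution, this becomes $\mathcal{F}(g) \cdot \chi_n = 0$ for every $n$. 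To deduce $\mathcal{F}(g) = 0$, I would test against an arbitrary compactly supported smooth $\psi$: since $\supp(\psi)$ meets only finitely many $\supp(\chi_n)$, the partition identity $\psi = \sum_n \chi_n \psi$ is a finite sum, hence $\langle \mathcal{F}(g), \psi \rangle = \sum_n \langle \mathcal{F}(g)\, \chi_n, \psi \rangle = 0$. Therefore $g \equiv 0$; evaluating at $t = 0$ yields $f(x) = f(y)$ for every $f \in C(X)$, forcing $x = y$. Thus $\Phi$ is the desired equivariant topological embedding.
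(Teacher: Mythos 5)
Your proposal is correct, and it rests on the same core mechanism as the paper's proof: compose the orbit functions $t\mapsto h_m(T_tx)$ with convolution against the inverse Fourier transforms of a partition of unity subordinate to the bands $I_n$, and recover injectivity from $\sum_n\chi_n\equiv1$ together with density of the family $\{h_m\}$. Your execution, however, differs in two genuine ways, both of which shorten the argument. First, the paper's partition is built from tent functions $\xi_n$ centered at $n/2$, which are not individually even, so its convolutions land in the complex spaces $\mathcal{B}^\mathbb{C}([(n-1)/2,(n+1)/2])$ and a further map $\Phi_3$ splitting into real and imaginary parts is required; by insisting that each $\chi_n$ be even with support in the symmetric set $I_n$, you keep every function real-valued and skip that step entirely. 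Second, and more substantively, the paper obtains the one-Lipschitz property by a separate family of averaging operators $H_l^j(f)(t)=\frac12\int_t^{t+1/(j+1)}f_l(s)\,ds$, which introduces yet another countable index $j$ that must be absorbed by reenumeration; you instead extract the Lipschitz bound directly from band-limitedness via Bernstein's inequality and a rescaling constant $c_n$, so the indexing $(m,n)$ with $n\le m$ matches the target product without any reshuffling. Your injectivity argument (testing $\mathcal{F}(g)\chi_n=0$ against $\psi\in C_c^\infty$ and using density of $C_c^\infty$ in the Schwartz space) is a mild variant of the paper's, which instead convolves against $\sum_n\varphi_n=\overline{\mathcal{F}}(1)=\delta_0$; both are sound. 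Three points you should make explicit in a full write-up: (i) continuity of $x\mapsto h_m(T_\cdot x)\ast\phi_n$ with respect to the metric $D$ requires the tail estimate $\int_{|s|>A}|\phi_n(s)|\,ds<\epsilon$ exactly as in the paper's continuity argument for $\Phi_2$; (ii) the density of $\{h_m\}_{m\ge n}$ after discarding finitely many terms deserves a word (either enumerate the dense set with infinite repetition, or note that $C(X,[-1,1])$ has no isolated points); (iii) Bernstein's inequality is applied here to a bounded continuous band-limited function, which is legitimate because such functions extend to entire functions of exponential type, but since the paper never states that inequality you should either cite it or replace it by the elementary bound $\|u\ast\phi_n'\|_{L^\infty}\le\|u\|_{L^\infty}\|\phi_n'\|_{L^1}$ and fold $\|\phi_n'\|_{L^1}$ into the choice of $c_n$.
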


\medskip

\begin{remark}
It is clear that Theorem \ref{submain} follows directly from Theorem \ref{main}. As a seemingly reachable question we may ask if it can be strengthened with a countable self-product of a member among them. Unfortunately, this is not correct. In fact, we shall clarify some wrong impression in Section 6:
\begin{itemize}
\item Under the translation $\sigma$ the space $\mathcal{B}([-1,1])^\mathbb{N}$ is \textit{not} universal.
\item Under the translation $\sigma$ the space $\prod_{n=0}^{+\infty}\mathcal{B}([-n/3-1/2,-n/3]\cup[n/3,n/3+1/2])$ is \textit{not} universal.
\end{itemize}
\end{remark}

We denote by $C^\infty(\mathbb{R})$ the set of smooth (i.e. infinitely differentiable) functions $f:\mathbb{R}\to[-1,1]$. Note that the space $C^\infty(\mathbb{R})\cap L(\mathbb{R})\subset C(\mathbb{R})$ is compact and invariant (under the translation $\sigma$).
\begin{corollary}
Under the translation $\sigma$ the space $\left(C^\infty(\mathbb{R})\cap L(\mathbb{R})\right)^\mathbb{N}$ is universal.
\end{corollary}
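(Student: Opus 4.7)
The plan is to derive the corollary as a direct consequence of Theorem \ref{main}, together with the classical fact that functions band-limited in a compact set are automatically smooth.

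The first step is to observe that for any compact subset $I \subset \mathbb{R}$, every $f \in \mathcal{B}(I)$ lies in $C^\infty(\mathbb{R})$. Indeed, $f$ is a bounded continuous function (hence a tempered distribution) whose Fourier transform is a distribution compactly supported in $I$; by the Paley--Wiener--Schwartz theorem, $f$ is (the restriction to $\mathbb{R}$ of) an entire function of exponential type, and in particular is infinitely differentiable. Combining this with $|\!|f|\!|_{L^\infty(\mathbb{R})} \le 1$ and the one-Lipschitz condition yields the equivariant inclusion
\[
\mathcal{B}(I) \cap L(\mathbb{R}) \;\subset\; C^\infty(\mathbb{R}) \cap L(\mathbb{R})
\]
for every compact $I \subset \mathbb{R}$, and this inclusion is clearly continuous with respect to the distance $D$ and commutes with the translation $\sigma$.

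The second step is purely organizational. Fix any admissible $0 < \alpha < \beta$ (for instance $\alpha = 1/3,\, \beta = 1/2$) and set $I_n = [-n\alpha - \beta, -n\alpha] \cup [n\alpha, n\alpha + \beta]$. The index set $\{(m,n) \in \mathbb{N}^2 : 0 \le n \le m\}$ is countably infinite, so any bijection with $\mathbb{N}$ produces an equivariant topological embedding
\[
\prod_{m=0}^{+\infty} \prod_{n=0}^{m} \bigl(\mathcal{B}(I_n) \cap L(\mathbb{R})\bigr) \;\hookrightarrow\; \bigl(C^\infty(\mathbb{R}) \cap L(\mathbb{R})\bigr)^{\mathbb{N}},
\]
obtained by applying the inclusion from the first step in each coordinate.

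The third step concludes universality. Given any real flow $(Y, \mathbb{R}, S)$, Theorem \ref{main} supplies an equivariant topological embedding of $Y$ into the left-hand side of the display above; composing with the embedding of the second step yields an equivariant topological embedding of $Y$ into $\bigl(C^\infty(\mathbb{R}) \cap L(\mathbb{R})\bigr)^\mathbb{N}$. There is no substantive obstacle beyond invoking Paley--Wiener; the corollary's interest is conceptual, namely that at the cost of relaxing the band-limit constraint to mere smoothness one recovers a genuine countable \emph{self-product} as a universal space, something Theorem \ref{main} itself does not deliver.
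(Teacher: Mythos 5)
Your proposal is correct and follows essentially the same route as the paper: both derive the corollary from Theorem \ref{main} by invoking the (generalized) Paley--Wiener theorem to conclude that band-limited bounded continuous functions are smooth, so that each factor $\mathcal{B}(I_n)\cap L(\mathbb{R})$ is a compact invariant subset of $C^\infty(\mathbb{R})\cap L(\mathbb{R})$, and then re-indexing the countable product. No gaps.
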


\medskip

As a quantitative result complementary to Theorem \ref{main}, we have the following classification of the real flows appearing in Theorem \ref{submain} under topological conjugacy and mean dimension. Note that for any positive real number $r$ the mean dimension of the translation on $\mathcal{B}([-r,r])\cap L(\mathbb{R})$ is finite. Therefore Theorem \ref{main} provides a more reasonable choice of a universal real flow.

\begin{theorem}[Main theorem 2]\label{maintheorembernstein}
Let $I,J\subset\mathbb{R}$ be compact intervals. Let $a\le b$ and $c\ge0$ be real numbers. The following assertions are true:
\begin{enumerate}
\item$(\mathcal{B}^\mathbb{C}(I),\mathbb{R},\sigma)$ is topologically conjugate to $(\mathcal{B}^\mathbb{C}(J),\mathbb{R},\sigma)$ if and only if $I=J$ or $I=-J$.
\item$(\mathcal{B}(I),\mathbb{R},\sigma)$ is topologically conjugate to $(\mathcal{B}(J),\mathbb{R},\sigma)$ if and only if $I\cap(-I)=J\cap(-J)$.
\item$\mdim(\mathcal{B}^\mathbb{C}([a,b]),\mathbb{R},\sigma)=2(b-a)$.
\item$\mdim(\mathcal{B}([-c,c]),\mathbb{R},\sigma)=2c$.
\end{enumerate}
\end{theorem}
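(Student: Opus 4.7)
My approach handles the four parts in the order (4), (3), (2), (1), each leaning on its predecessors. Parts (3) and (4) are sampling-theoretic mean-dimension computations; (2) is an immediate consequence of (4) together with a Hermitian-symmetry observation; and the ``only if'' direction of (1), which must pin down the \emph{position} of $I$ and not just its length, is the main obstacle.

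\textbf{Mean dimensions (3) and (4).} Both are $\Widim$ computations of Lindenstrauss--Tsukamoto type. For (4) I would apply the Shannon--Whittaker sampling theorem: each $f \in \mathcal{B}([-c,c])$ is recovered from its values on $(1/(2c))\mathbb{Z}$, so its restriction to a window of length $N$ is effectively parametrised by $2cN$ real numbers, which (after standard smoothing of the sinc interpolation kernel) yields $\Widim_\epsilon(\mathcal{B}([-c,c]),d_N) \le 2cN + o_\epsilon(N)$ and hence $\mdim \le 2c$. For the matching lower bound I would, for each large $N$, exhibit a $2\lfloor cN\rfloor$-parameter family of functions in $\mathcal{B}([-c,c])$ built from translates of a windowed-sinc atom on the Nyquist grid, whose $d_N$-separation forces $\Widim_\epsilon \ge 2cN - o(N)$. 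Part (3) repeats the argument with complex samples at the Nyquist rate $b-a$ (independent of the centre of $[a,b]$), doubling the real count to $2(b-a)$.

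\textbf{Part (2) and the easy direction of (1).} For real $f$, $\hat f(-\xi) = \overline{\hat f(\xi)}$, so $\supp\hat f$ is symmetric about $0$ and $\mathcal{B}(I) = \mathcal{B}(I\cap(-I))$ as $\mathbb{R}$-flows. The ``if'' direction of (2) is immediate; for ``only if'' write $I\cap(-I)=[-c,c]$ and $J\cap(-J)=[-d,d]$, so that conjugacy-invariance of mean dimension together with (4) forces $c=d$. For the ``if'' direction of (1) I use the identity (for $I=J$) and the complex conjugation $\phi(f)(t)=\overline{f(t)}$ (for $I=-J$): the standard relation $\mathcal{F}(\bar f)(\xi)=\overline{\mathcal{F}(f)(-\xi)}$ shows $\phi$ sends $\mathcal{B}^\mathbb{C}(I)$ onto $\mathcal{B}^\mathbb{C}(-I)$, and it plainly commutes with $\sigma$.

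\textbf{Only-if direction of (1)---the main obstacle.} Assume $\phi$ is an equivariant homeomorphism between $\mathcal{B}^\mathbb{C}(I)$ and $\mathcal{B}^\mathbb{C}(J)$. My plan is to extract two conjugacy-invariant features of the flow that together pin $I$ down up to the involution $I\leftrightarrow -I$. First, the $\sigma$-fixed-point set of $\mathcal{B}^\mathbb{C}(I)$ consists of the constant functions it contains: a closed $2$-disk when $0\in I$ and the singleton $\{0\}$ otherwise, so ``$0\in I$'' is a conjugacy invariant. Second, for each $p>0$ the closed invariant subset $P_p:=\{f:\sigma_p f=f\}$ coincides with the $L^\infty$-unit ball of the finite-dimensional complex vector space $V_p=\mathrm{span}\{e^{2\pi i nt/p}:n/p\in I\}$; since $0$ lies in the $V_p$-interior of this ball, $P_p$ is a convex body in $V_p$ of topological dimension $2N_p$, where $N_p:=|I\cap(1/p)\mathbb{Z}|$. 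As $\phi$ maps $P_p$ homeomorphically onto the $J$-counterpart, $N_p^I=N_p^J$ for every $p>0$. Writing $I=[a,b]$, the piecewise-constant function $p\mapsto N_p$ has jumps exactly at $p=n/|a|$ and $p=n/|b|$ for $n\in\mathbb{Z}_{>0}$, with jump sizes encoding how many integers enter $[pa,pb]$ at that $p$; this data recovers the unordered pair $\{|a|,|b|\}$, and combined with the ``$0\in I$'' bit this forces $J\in\{I,-I\}$. The main technical points I expect to wrestle with are verifying that the $L^\infty$ constraint does not lower the topological dimension of $P_p$ below $2N_p$, and treating the degenerate cases $a=0$, $b=0$, or $a=b$ in which some of the jumps coincide or vanish, to be resolved by falling back on the mean-dimension equality $|I|=|J|$ from (3) together with the ``$0\in I$'' bit.
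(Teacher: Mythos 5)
Your proposal is correct and, for the genuinely delicate step --- the ``only if'' direction of (1) --- it takes a different route from the paper's, so a comparison is worthwhile. For (3) and (4) the paper does not run direct $\Widim_\epsilon$ estimates: it first proves scaling, translation (via modulation by $e^{2\pi\sqrt{-1}t}$, which is only $\mathbb{Z}$-equivariant, whence the passage through $\sigma_1$) and real-versus-complex comparison lemmas reducing everything to $\mdim(\mathcal{B}([-1/2,1/2]),\mathbb{Z},\sigma_1)=1$, and then gets the upper bound from the equivariant injection $f\mapsto f|_{\mathbb{Z}}$ into $([-1,1]^\mathbb{Z},\sigma)$ for band-limit $b<1/2$ (the sampling lemma) and the lower bound from an equivariant embedding of the full shift $(([0,1]^2)^\mathbb{Z},\sigma)$ built from an interpolation kernel; your windowed-sinc $\Widim$ computation is the same pair of tools expressed quantitatively, at the cost of having to control interpolation tails window by window, which the paper's subsystem/supersystem comparisons avoid. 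For (1), the paper chooses a single $\tau\in I$ with $|\tau|>\max_{\gamma\in J}|\gamma|$ (possible after swapping once $|I|=|J|$ and $I\notin\{J,-J\}$), pushes the periodic function $e^{2\pi\sqrt{-1}\tau t}$ through the conjugacy, and shows via the Fourier series of its $1/\tau$-periodic image (whose frequencies lie in $J\cap\tau\mathbb{Z}\subset\{0\}$) that the image is constant, contradicting injectivity. You instead compare, for every $p>0$, the topological dimensions of the $p$-periodic sets $P_p$. Both arguments rest on the same underlying fact --- a bounded continuous $p$-periodic function band-limited in $I$ is a finite trigonometric sum with frequencies in $I\cap(1/p)\mathbb{Z}$, by the generalized Paley--Wiener theorem --- and your two flagged worries are harmless: the sup-norm is a genuine norm on $V_p$, so $P_p$ is a convex body homeomorphic to a closed ball of dimension $2N_p$; and you do not need the full jump data of $p\mapsto N_p$, since the location of its first discontinuity already recovers $\max(|a|,|b|)$, which together with $|I|=|J|$ from (3) and the ``$0\in I$'' bit determines $I$ up to sign, sidestepping the coinciding-jump cases. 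Your family-of-invariants argument is arguably more systematic (it yields $N_p^I=N_p^J$ for all $p$, strictly more than needed), while the paper's single well-chosen periodic function is shorter; one minor edge case both treatments gloss over in (2) is $I\cap(-I)=\emptyset$ versus $\{0\}$, where mean dimension (zero in both cases) does not separate a one-point space from the arc of constants, though cardinality trivially does.
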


\medskip

The definition of mean dimension is located in Section 2. We situate in Section 3 a short discussion about the universality of $L(\mathbb{R})$ (for $\mathbb{Z}$-actions), where we will indicate in particular that the mean dimension of $L(\mathbb{R})$ is $+\infty$. We will prove Theorem \ref{maintheorembernstein} in Section 4. In Section 5, we shall give a constructive proof of Theorem \ref{main}. A novelty of our method is that it overcomes a shortcoming arising from the Baire category approach. As presented in our proof, we are able to see an explicitly constructed embedding mapping of any real flow in the universal real flow that we suggested (which is also explicit) in the main theorem. Section 6 contains a collection of explanations in relation to our results (including the corollary), in which a final remark will end the body of this paper with the impossibility of improving our main result in a seemingly achievable direction. An appendix is logically independent of all the sections.

\bigskip

\noindent\textbf{Acknowledgements.}
The authors are grateful to the anonymous referee for his/her insightful comments and helpful suggestions which improve this paper greatly. L. Jin was supported by Basal Funding AFB 170001 and Fondecyt Grant No. 3190127, and was partially supported by NNSF of China No. 11971455. Y. Qiao was supported by NNSF of China No. 11901206. S. Tu was supported by NNSF of China No. 11801584 and 11871228.

\bigskip

\section{Preliminaries for mean dimension}
Mean dimension was introduced by Gromov \cite{G} in 1999, and was systematically studied by Lindenstrauss and Weiss \cite{LW} in 2000 as a topological invariant of dynamical systems. We collect in this section fundamental material (borrowed from \cite{LW,GJfm}) on mean dimension for $\mathbb{R}$-actions and $\mathbb{Z}$-actions.

\medskip

Let $(X,d)$ be a compact metric space and $\epsilon>0$. We denote by $\Widim_\epsilon(X,d)$ the minimum topological dimension $\dim(K)$ (i.e. the Lebesgue covering dimension) of a compact metrizable space $K$ which admits an $\epsilon$-embedding $f:X\to K$ with respect to the distance $d$, namely, a continuous mapping $f:X\to K$ satisfying that $f(x)=f(x^\prime)$ implies $d(x,x^\prime)<\epsilon$ for all $x,x^\prime\in X$. We may easily verify: $$\dim(X)=\lim_{\epsilon\to0}\Widim_\epsilon(X,d).$$

\medskip

Let $(X,\mathbb{R},T)$ be a real flow and $d$ a compatible metric on $X$. For each nonnegative real number $r$ we define a compatible metric $d_r^T$ on $X$ by $$d_r^T(x,x^\prime)=\max_{0\le t\le r}d(T_tx,T_tx^\prime),\quad\forall x,x^\prime\in X.$$ The \textbf{mean dimension} of $(X,\mathbb{R},T)$ is defined by $$\mdim(X,\mathbb{R},T)=\lim_{\epsilon\to0}\lim_{r\to+\infty}\frac{\Widim_\epsilon(X,d_r^T)}{r}.$$ The limits in the definition exist, and the value $\mdim(X,\mathbb{R},T)\in[0,+\infty]$ does not depend on the choice of a compatible metric $d$ on $X$. Clearly, if $(Y,\mathbb{R},S)$ can be embedded in $(X,\mathbb{R},T)$ then $$\mdim(Y,\mathbb{R},S)\le\mdim(X,\mathbb{R},T).$$

For every $\lambda\in\mathbb{R}$ we denote by $(X,\mathbb{R},T^\lambda)$ the real flow with the following action: $$T^\lambda:\mathbb{R}\times X\to X,\quad(t,x)\mapsto T_{\lambda t}x.$$
\begin{lemma}
For any real flow $(X,\mathbb{R},T)$ and $\lambda\in\mathbb{R}$ $$\mdim(X,\mathbb{R},T^\lambda)=|\lambda|\cdot\mdim(X,\mathbb{R},T).$$
\end{lemma}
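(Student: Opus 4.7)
The plan is to unwind the definition of the mean dimension and check how the metric $d_r^{T^\lambda}$ compares to a reparametrization of $d_s^T$. Fix a compatible metric $d$ on $X$ and write
$$d_r^{T^\lambda}(x,x^\prime)=\max_{0\le t\le r}d(T_{\lambda t}x,T_{\lambda t}x^\prime).$$
I would split into three cases according to the sign of $\lambda$.

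For $\lambda>0$, the substitution $s=\lambda t$ shows that $d_r^{T^\lambda}=d_{\lambda r}^T$ as metrics on $X$. Hence $\Widim_\epsilon(X,d_r^{T^\lambda})=\Widim_\epsilon(X,d_{\lambda r}^T)$, and
$$\lim_{r\to+\infty}\frac{\Widim_\epsilon(X,d_r^{T^\lambda})}{r}=\lambda\cdot\lim_{r\to+\infty}\frac{\Widim_\epsilon(X,d_{\lambda r}^T)}{\lambda r}=\lambda\cdot\lim_{s\to+\infty}\frac{\Widim_\epsilon(X,d_s^T)}{s}.$$
Sending $\epsilon\to 0$ gives $\mdim(X,\mathbb{R},T^\lambda)=\lambda\cdot\mdim(X,\mathbb{R},T)$, as desired.

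For $\lambda<0$, I would first reduce to the case $\lambda=-1$, i.e.\ show that time reversal does not affect the mean dimension. The point is that
$$d_r^{T^{-1}}(x,x^\prime)=\max_{0\le t\le r}d(T_{-t}x,T_{-t}x^\prime)=\max_{0\le s\le r}d(T_s(T_{-r}x),T_s(T_{-r}x^\prime))=d_r^T(T_{-r}x,T_{-r}x^\prime),$$
so the homeomorphism $T_{-r}\colon X\to X$ pulls $d_r^T$ back to $d_r^{T^{-1}}$. Since $\Widim_\epsilon$ is invariant under homeomorphism of the underlying metric space, $\Widim_\epsilon(X,d_r^{T^{-1}})=\Widim_\epsilon(X,d_r^T)$, and therefore $\mdim(X,\mathbb{R},T^{-1})=\mdim(X,\mathbb{R},T)$. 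Writing $T^\lambda=(T^{-1})^{|\lambda|}$ and applying the positive case gives the result for $\lambda<0$. Finally, for $\lambda=0$ the action $T^0$ is trivial, so $d_r^{T^0}=d$ for every $r$, hence $\Widim_\epsilon(X,d_r^{T^0})$ is independent of $r$ and the normalized limit is $0$ (with the usual convention $0\cdot(+\infty)=0$, consistent with $\mdim$ possibly being infinite).

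I do not anticipate a genuine obstacle; the only point that requires a little care is the negative case, where one must convert the reversed-time Bowen metric to the forward one by conjugating with the homeomorphism $T_{-r}$ in order to invoke the invariance of $\Widim_\epsilon$.
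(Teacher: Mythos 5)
Your proof is correct. The paper states this lemma without proof, as part of the standard background borrowed from the cited references, and your argument --- the substitution $s=\lambda t$ giving $d_r^{T^\lambda}=d_{\lambda r}^T$ for $\lambda>0$, the conjugation by the isometry $T_{-r}$ to handle time reversal, and the separate degenerate case $\lambda=0$ --- is precisely the standard one, carried out correctly at the one delicate point (invoking the invariance of $\Widim_\epsilon$ under the homeomorphism $T_{-r}$ pulling $d_r^T$ back to $d_r^{T^{-1}}$).
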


\medskip

For a $\mathbb{Z}$-action $(X,\mathbb{Z},\phi)$, i.e. $(X,d)$ is a compact metric space and $\phi:X\to X$ is a homeomorphism, and a positive integer $n$ we define a compatible metric $d_n^\phi$ on $X$ by $$d_n^\phi(x,x^\prime)=\max_{i\in\mathbb{Z},\,0\le i\le n-1}d(\phi^ix,\phi^ix^\prime),\quad\forall x,x^\prime\in X.$$ We recall that the mean dimension of $(X,\mathbb{Z},\phi)$ is similarly defined by $$\mdim(X,\mathbb{Z},\phi)=\lim_{\epsilon\to0}\lim_{n\to+\infty}\frac{\Widim_\epsilon(X,d_n^\phi)}{n}.$$ The $\mathbb{Z}$-translation on $([0,1]^d)^\mathbb{Z}$ (where $d\in\mathbb{N}\cup\{+\infty\}$) is defined by $$\sigma:([0,1]^d)^\mathbb{Z}\to([0,1]^d)^\mathbb{Z},\quad(x_i)_{i\in\mathbb{Z}}\mapsto(x_{i+1})_{i\in\mathbb{Z}}.$$ We do not change the notation $\sigma$ here because there is no ambiguity (as the acting group is always indicated precisely in the midst of a triple).
\begin{lemma}
For every $d\in\mathbb{N}\cup\{+\infty\}$ $$\mdim(([0,1]^d)^\mathbb{Z},\mathbb{Z},\sigma)=d.$$
\end{lemma}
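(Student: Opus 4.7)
The plan is to show separately the two bounds $\mdim(([0,1]^d)^\mathbb{Z},\mathbb{Z},\sigma)\le d$ and $\ge d$. I would equip the space $X=([0,1]^d)^\mathbb{Z}$ with the compatible metric $D(x,y)=\sum_{k\in\mathbb{Z}}2^{-|k|}\rho(x_k,y_k)$, where $\rho$ is chosen as the $\ell^\infty$ metric when $d$ is finite and as $\rho(a,b)=\sum_{j=1}^{\infty}2^{-j}|a_j-b_j|$ when $d=\infty$. The resulting orbit metric $D_n^\sigma$ then admits two-sided estimates in terms of finite coordinate windows that make both inequalities tractable.

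For the upper bound with $d$ finite, I would fix $\epsilon>0$ and $n\in\mathbb{N}$, choose $N$ large enough that $2^{-N+1}<\epsilon$, and consider the coordinate projection $\pi:X\to([0,1]^d)^{\{-N,\ldots,n-1+N\}}$. If $\pi(x)=\pi(y)$, then for each $0\le i\le n-1$ the coordinates $k$ on which $\sigma^ix$ and $\sigma^iy$ can disagree satisfy $|k-i|\ge N+1$, contributing in total at most $2\sum_{m\ge N+1}2^{-m}=2^{-N+1}$ to $D(\sigma^ix,\sigma^iy)$. Hence $\pi$ is an $\epsilon$-embedding into a compact space of Lebesgue dimension $d(n+2N)$, so $\Widim_\epsilon(X,D_n^\sigma)\le d(n+2N)$, and dividing by $n$ and letting $n\to\infty$ then $\epsilon\to 0$ yields $\mdim\le d$. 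The case $d=\infty$ needs no bound here.

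For the lower bound I would introduce the periodic embedding $\iota_n:([0,1]^d)^n\hookrightarrow X$ sending $(a_0,\ldots,a_{n-1})$ to its period-$n$ extension. Since $D$ includes the coordinate $k=0$ with weight $1$, shifting gives $D(\sigma^j\iota_na,\sigma^j\iota_nb)\ge\rho(a_j,b_j)$, so $D_n^\sigma(\iota_na,\iota_nb)\ge\max_{0\le j\le n-1}\rho(a_j,b_j)$. For finite $d$ this equals the $\ell^\infty$ distance on $[0,1]^{dn}$; combining monotonicity of $\Widim_\epsilon$ (under both passage to subsets and dominating metrics) with the classical rigidity statement $\Widim_\epsilon([0,1]^N,\ell^\infty)=N$ for $\epsilon<1/2$ yields $\Widim_\epsilon(X,D_n^\sigma)\ge dn$, hence $\mdim\ge d$. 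For $d=\infty$, the same argument applied to the inclusion $[0,1]^{d'}\hookrightarrow[0,1]^\mathbb{N}$ as the first $d'$ coordinates produces $\mdim\ge d'$ for every finite $d'$ (the admissible range of $\epsilon$ shrinking like $2^{-d'-1}$, owing to the factor $2^{-d'}$ that appears in the lower estimate of $\rho$), and thus $\mdim=\infty$.

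The main technical input, which I would treat as a black box, is the rigidity equality $\Widim_\epsilon([0,1]^N,\ell^\infty)=N$ for $\epsilon<1/2$, a standard consequence of the theory of Lebesgue covering dimension. Everything else reduces to elementary bookkeeping with the weighted shift metric $D$, and the role of the periodic embedding $\iota_n$ is precisely to transfer this finite-dimensional rigidity into a dynamical lower bound at rate $d$ per time step.
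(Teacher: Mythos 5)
Your argument is correct and is the standard Lindenstrauss--Weiss proof: coordinate projection onto a window of length $n+2N$ for the upper bound, and the periodic embedding of $[0,1]^{dn}$ combined with the rigidity $\Widim_\epsilon([0,1]^N,\ell^\infty)=N$ for the lower bound, with the correct $2^{-d'}$ bookkeeping in the $d=\infty$ case. The paper itself states this lemma without proof, citing \cite{LW,GJfm}, and your write-up matches the argument given there.
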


\medskip

We note that a real flow $(X,\mathbb{R},T)$ naturally induces a $\mathbb{Z}$-action $(X,\mathbb{Z},T_1)$.
\begin{lemma}
For any real flow $(X,\mathbb{R},T)$ $$\mdim(X,\mathbb{R},T)=\mdim(X,\mathbb{Z},T_1).$$
\end{lemma}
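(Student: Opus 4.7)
The lemma is a standard fact asserting that the mean dimension of an $\mathbb{R}$-action coincides with that of its time-one discretization. My plan is to prove the two inequalities $\mdim(X,\mathbb{Z},T_1)\le\mdim(X,\mathbb{R},T)$ and $\mdim(X,\mathbb{R},T)\le\mdim(X,\mathbb{Z},T_1)$ separately, relying throughout on the elementary monotonicity fact that $\Widim_\epsilon$ is non-decreasing in the metric: if $d_1\le d_2$ pointwise on $X\times X$, then every $\epsilon$-embedding for $d_2$ is automatically an $\epsilon$-embedding for $d_1$, so $\Widim_\epsilon(X,d_1)\le\Widim_\epsilon(X,d_2)$.

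For the easy direction $\mdim(X,\mathbb{Z},T_1)\le\mdim(X,\mathbb{R},T)$, I would observe that $d_n^{T_1}(x,x')\le d_n^T(x,x')$ for every positive integer $n$, because the left-hand side takes a maximum over the integer points $\{0,1,\dots,n-1\}\subset[0,n]$ while the right-hand side takes the maximum over the whole interval. The monotonicity above then gives $\Widim_\epsilon(X,d_n^{T_1})\le\Widim_\epsilon(X,d_n^T)$; dividing by $n$, letting $n\to+\infty$, and then $\epsilon\to0$ yields the inequality.

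The reverse inequality is the main step and relies on the uniform continuity of $(t,x)\mapsto T_tx$ on the compact set $[0,1]\times X$. Given $\epsilon>0$, I would choose $\delta=\delta(\epsilon)>0$ such that $d(y,z)<\delta$ and $s\in[0,1]$ imply $d(T_sy,T_sz)<\epsilon$. For each $r>0$, set $n=\lfloor r\rfloor+1$. Every $t\in[0,r]$ decomposes as $t=i+s$ with $i\in\{0,1,\dots,n-1\}$ and $s\in[0,1]$, so $d(T_tx,T_tx')=d(T_sT_ix,T_sT_ix')$. If $d_n^{T_1}(x,x')<\delta$, then $d(T_ix,T_ix')<\delta$ for all admissible $i$, whence $d(T_tx,T_tx')<\epsilon$ for every $t\in[0,r]$, i.e. $d_r^T(x,x')<\epsilon$. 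Consequently any $\delta$-embedding of $(X,d_n^{T_1})$ is automatically an $\epsilon$-embedding of $(X,d_r^T)$, giving $\Widim_\epsilon(X,d_r^T)\le\Widim_\delta(X,d_n^{T_1})$. Dividing by $r$ and using $n/r\to1$ as $r\to+\infty$, I obtain
$$\lim_{r\to+\infty}\frac{\Widim_\epsilon(X,d_r^T)}{r}\le\lim_{n\to+\infty}\frac{\Widim_\delta(X,d_n^{T_1})}{n}.$$
Since $\delta\to0$ as $\epsilon\to0$, letting $\epsilon\to0$ on the left (and hence $\delta\to0$ on the right) yields $\mdim(X,\mathbb{R},T)\le\mdim(X,\mathbb{Z},T_1)$.

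The argument is essentially routine; the only mildly delicate point is the interchange at the end. It is handled by noting that the outer limits in both definitions of mean dimension are suprema over $\epsilon>0$ of a monotone quantity, so the inequality obtained for each fixed $(\epsilon,\delta(\epsilon))$ persists after passing to the limit along any sequence $\epsilon_k\to0$.
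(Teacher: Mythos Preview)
Your argument is correct. Both inequalities are handled cleanly: the first by the trivial inclusion of the integer sampling points in $[0,r]$, and the second by the uniform continuity of the flow on $[0,1]\times X$, which converts a $\delta$-embedding for $d_n^{T_1}$ into an $\epsilon$-embedding for $d_r^T$. One small simplification: the ``mildly delicate interchange'' at the end is in fact immediate, since for each fixed $\epsilon$ the right-hand side $\lim_{n}\Widim_{\delta(\epsilon)}(X,d_n^{T_1})/n$ is already bounded above by $\mdim(X,\mathbb{Z},T_1)$ (the outer limit being a supremum), so no care about $\delta\to0$ is needed.

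As for comparison with the paper: the paper does not give a proof of this lemma at all. It is listed in Section~2 among the preliminary facts ``borrowed from \cite{LW,GJfm}'' and is used implicitly thereafter. Your write-up is precisely the standard argument one finds in those references, so there is nothing to contrast.
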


\medskip

We would like to remind the reader to keep in mind that all the statements in this section will be used \textit{implicitly} in this paper.

\medskip

\section{Universality of $L(\mathbb{R})$ for $\mathbb{Z}$-actions}
The purpose of this section is to show that under the translation $\sigma$ the space $L(\mathbb{R})$ has mean dimension $+\infty$. The main result of this section is Theorem \ref{thm:LR}.

\begin{theorem}\label{thm:LR}
All $\mathbb{Z}$-actions can be embedded in the $\mathbb{Z}$-translation on $L(\mathbb{R})$.
\end{theorem}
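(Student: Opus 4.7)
The plan is to reduce to embedding the shift on the infinite-dimensional Hilbert cube. Every $\mathbb{Z}$-action $(X,\mathbb{Z},\phi)$ already embeds equivariantly into $(([-1,1]^\mathbb{N})^\mathbb{Z},\mathbb{Z},\sigma)$ via any topological embedding $\iota=(\iota_k)_{k\geq1}\colon X\hookrightarrow[-1,1]^\mathbb{N}$ through $x\mapsto(\iota(\phi^n x))_{n\in\mathbb{Z}}$, so it is enough to construct an equivariant embedding $\Phi\colon X\to L(\mathbb{R})$, where the target carries the time-one translation $\sigma(f)(t)=f(t+1)$.

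For the construction I would fix amplitudes $a_k>0$ satisfying $a_k+a_{k+1}\leq 2^{-k-1}$, for instance $a_k=2^{-k-2}$, and place marker points $t_{n,k}=n+1-2^{-k}$ for $n\in\mathbb{Z}$ and $k\geq1$. Then define $\Phi(x)\colon\mathbb{R}\to[-1,1]$ by prescribing
$$\Phi(x)(n)=0,\qquad\Phi(x)(t_{n,k})=a_k\,\iota_k(\phi^n x),$$
and linearly interpolating on each gap between consecutive prescribed points inside $[n,n+1]$; continuity at the accumulation point $n+1$ is enforced by $a_k\to 0$. On each such gap the slope is bounded above by $(a_k+a_{k+1})/2^{-k-1}\leq 1$, so $\Phi(x)$ is one-Lipschitz with $\|\Phi(x)\|_\infty\leq a_1<1$, hence lies in $L(\mathbb{R})$. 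Equivariance is built in: the prescription of $\Phi(\phi x)$ at $t_{n,k}$ uses $\iota_k(\phi^{n+1}x)$, which is precisely the prescription of $\Phi(x)$ at $t_{n+1,k}=t_{n,k}+1$. Injectivity is immediate because the marker values recover $\iota_k(x)$ for every $k$ and $\iota$ is injective. Continuity of $\Phi$ into $(L(\mathbb{R}),D)$, i.e. uniform convergence on compacts, follows because on each $[-N,N]$ only the finitely many markers with $k\leq K(\varepsilon)$ perturb the output by more than $\varepsilon$, the tail being controlled by $a_k\to 0$; those finitely many marker values depend continuously on $x$. Since $X$ is compact, this continuous injection is automatically a topological embedding.

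The only genuinely delicate point is cramming countably many Hilbert-cube coordinates into each unit interval while preserving the one-Lipschitz constraint up to the accumulation point at $n+1$; the coupled geometric decay of the spacings $2^{-k}$ and the amplitudes $a_k$ resolves this. Everything else is formal bookkeeping, and the construction has the incidental advantage of being entirely explicit.
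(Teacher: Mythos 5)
Your construction is correct. The slope bound $(a_k+a_{k+1})/2^{-k-1}\le 3/4$ on each interpolation gap, the vanishing of the amplitudes at the accumulation points $n+1$, the translation-compatibility of the marker grid, and the recovery of $\iota_k(x)$ from the values at $t_{0,k}$ all check out, and continuity plus compactness of $X$ upgrades the continuous injection to a topological embedding as you say. (Your opening reduction to the shift on $([-1,1]^{\mathbb N})^{\mathbb Z}$ is never actually used, since you then build $\Phi\colon X\to L(\mathbb R)$ directly, but that is only a redundancy.) However, this is a genuinely different route from the paper's proof. The paper passes to the suspension $(S(X),\mathbb R,T)$ of $(X,\mathbb Z,\phi)$ generated by the constant function $1$, observes that this real flow has no fixed points, and then invokes the Gutman--Jin--Tsukamoto refinement of the Bebutov--Kakutani theorem (Theorem \ref{thm:gjt}), which says that a real flow embeds in $(L(\mathbb R),\mathbb R,\sigma)$ precisely when its fixed-point set embeds in $[0,1]$; restricting the resulting $\mathbb R$-equivariant embedding to time one gives the $\mathbb Z$-embedding. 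The authors explicitly remark that an elementary proof is possible by embedding $[0,1]^{\mathbb N}$ into the space of one-Lipschitz functions on $\mathbb R/\mathbb Z$ vanishing at $0$ --- which is in essence exactly your marker construction on a fundamental domain --- but they deliberately choose the abstract route to highlight the suspension link between $\mathbb Z$-actions and real flows. Your version buys self-containedness and explicitness (in the spirit of the rest of the paper) at the cost of not exhibiting that connection; the paper's version is shorter but leans on a nontrivial external theorem.
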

In other words, $(L(\mathbb{R}),\mathbb{Z},\sigma_1)$ is a universal $\mathbb{Z}$-action. It follows directly from Theorem \ref{thm:LR} that $$\mdim(L(\mathbb{R}),\mathbb{R},\sigma)=\mdim(L(\mathbb{R}),\mathbb{Z},\sigma_1)=+\infty.$$

\medskip

In order to prove Theorem \ref{thm:LR}, we employ a result for real flows, due to Gutman, Jin and Tsukamoto \cite[Theorem 1.3]{GJT}, which refines the classical Bebutov--Kakutani dynamical embedding theorem, as follows:
\begin{theorem}[{\cite{GJT}}]\label{thm:gjt}
A real flow $(X,\mathbb{R},T)$ can be embedded in $(L(\mathbb{R}),\mathbb{R},\sigma)$ if and only if the set of its fixed points $\{x\in X:T_tx=x,\,\forall t\in\mathbb{R}\}$ can be (topologically) embedded in $[0,1]$.
\end{theorem}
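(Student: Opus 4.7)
The plan is to prove the easy necessary direction using the fixed-point space of $\sigma$ on $L(\mathbb{R})$, and to prove sufficiency via a Baire-category construction in the space of equivariant continuous maps $X\to L(\mathbb{R})$.

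\textbf{Necessity.} The $\sigma$-fixed points of $L(\mathbb{R})$ are exactly the constant functions with values in $[-1,1]$, forming a subspace of $L(\mathbb{R})$ homeomorphic to $[-1,1]$ in the metric $D$. Any equivariant embedding $\phi:X\to L(\mathbb{R})$ sends the $T$-fixed-point set $F=\{x\in X:T_tx=x,\,\forall t\in\mathbb{R}\}$ homeomorphically into this copy of $[-1,1]$, and composing with the standard embedding $[-1,1]\hookrightarrow[0,1]$ yields the required embedding of $F$ into $[0,1]$.

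\textbf{Sufficiency.} The first step is the standard reformulation: an equivariant continuous map $\phi:X\to L(\mathbb{R})$ is in bijection with a continuous function $g:X\to[-1,1]$ via $\phi(x)(t)=g(T_tx)$, subject to the orbit-Lipschitz condition $|g(T_sx)-g(T_tx)|\le|s-t|$ for all $x\in X$ and $s,t\in\mathbb{R}$. The induced $\phi$ is injective exactly when the orbit-traces $t\mapsto g(T_tx)$ separate the points of $X$; in particular $g|_F$ must be injective, and this is where the hypothesis that $F$ embeds in $[-1,1]$ (equivalently $[0,1]$) will be used. Consider the nonempty complete metric space
$$\mathcal{G}=\bigl\{g\in C(X,[-1,1]):\ |g(T_sx)-g(T_tx)|\le|s-t|\ \forall x,s,t\bigr\}$$
with the sup-norm. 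For each $\epsilon>0$, let $\mathcal{U}_\epsilon\subset\mathcal{G}$ denote the subset of those $g$ for which the associated $\phi$ is $\epsilon$-injective. Each $\mathcal{U}_\epsilon$ is open by compactness of $X$ and uniform continuity, so a Baire intersection $\bigcap_n\mathcal{U}_{1/n}$, once shown non-empty, produces a $g$ whose $\phi$ is a genuine embedding.

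\textbf{Main obstacle.} The technical heart is the density of each $\mathcal{U}_\epsilon$ in $\mathcal{G}$: given $g\in\mathcal{G}$ and $\delta>0$, construct $g'\in\mathcal{U}_\epsilon$ with $\|g-g'\|_\infty<\delta$. I would cover $X$ by finitely many small open pieces; preassign on $F$ a Tietze-smoothed version of the hypothesized embedding $\iota:F\hookrightarrow[-1,1]$; and add to $g$ small $1$-Lipschitz bumps supported on short orbit segments through a chosen base point of each piece, tuned so that points in different pieces and points at different phases of the same orbit produce distinguishable trace-signatures, while the orbit-Lipschitz constraint remains valid globally. The delicate issues are: (i) the bump amplitudes must stay below $\delta$ in sup-norm while still having enough ``separating power'' on the traces; (ii) orbits that accumulate on $F$, where the Lipschitz constraint forces $g$ to vary slowly along the flow, must nonetheless be separated from $F$ and from each other, which uses that the extension of $\iota$ distinguishes fixed points by its values; (iii) a point and its $u$-shift on the same orbit produce distinct traces as soon as some nontrivial bump meets that orbit, which is arranged by the covering construction. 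Once this perturbation lemma is established, the Baire intersection concludes the proof.
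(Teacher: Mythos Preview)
The paper does not contain a proof of this theorem: it is quoted verbatim from \cite{GJT} and used as a black box to derive Theorem~\ref{thm:LR}. So there is no ``paper's own proof'' to compare against here; your proposal is being measured only against plausibility and against what one knows of the literature.

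Your necessity argument is correct and complete. For sufficiency, the reformulation via $g\in C(X,[-1,1])$ with the orbit-Lipschitz constraint, and the Baire setup with the sets $\mathcal{U}_\epsilon$, are the right framework and match the classical Bebutov--Kakutani strategy. However, what you have written is an outline, not a proof: the entire content of the theorem lies in the density of $\mathcal{U}_\epsilon$, and you have only \emph{listed} the obstacles (i)--(iii) rather than resolved them. In particular, the step ``add to $g$ small $1$-Lipschitz bumps supported on short orbit segments'' hides a genuine difficulty: a bump must be a function on $X$, not on an orbit parameter, so its support is an open set in $X$ that may be revisited by the same orbit; you must simultaneously keep the sum orbit-$1$-Lipschitz everywhere and still produce enough separation, and near $F$ the flow is arbitrarily slow so bump shapes are severely constrained. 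None of this is addressed beyond acknowledgement. As written, the proposal would not be accepted as a proof.

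For context, the argument in \cite{GJT} is not a Baire-category perturbation but an explicit construction of the embedding (in the spirit this paper itself favours in Section~5, where it remarks that its method ``overcomes a shortcoming arising from the Baire category approach''). If you want to pursue the Baire route, you will need to actually carry out the perturbation lemma, paying particular attention to orbits that spend long times near $F$; if you want a shorter path, consult \cite{GJT} directly for the constructive argument.
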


We remark here that it is also possible to give an elementary proof of Theorem \ref{thm:LR} (without applying Theorem \ref{thm:gjt}). In fact, it suffices to notice that $[0,1]^\mathbb{N}$ is (topologically) embedded in $$L_0(\mathbb{R}/\mathbb{Z})=\{f:\mathbb{R}/\mathbb{Z}\to[0,1]:\,f(0)=0,\,\;|f(x)-f(x^\prime)|\le|x-x^\prime|,\;\forall x,x^\prime\in\mathbb{R}/\mathbb{Z}\}.$$ However, we go through a more abstract approach as we would like to present a connection between real flows and $\mathbb{Z}$-actions. Now we prove Theorem \ref{thm:LR}.

\medskip

\begin{proof}
We take a $\mathbb{Z}$-action $(X,\mathbb{Z},\phi)$. Let $(S(X),\mathbb{R},T)$ be the suspension over $(X,\mathbb{Z},\phi)$ generated by the constant function $1$, namely, $$S(X)=(X\times[0,1])/\sim$$ where $\sim$ is the equivalence relation given by $(x,1)\sim(\phi(x),0)$, $$T:\mathbb{R}\times S(X)\to S(X),\quad(t,(x,s))\mapsto(\phi^n(x),s^\prime)$$ where $n\in\mathbb{N}$ and $s^\prime\in[0,1]$ satisfy $n+s^\prime=t+s$.

It is clear that the real flow $(S(X),\mathbb{R},T)$ has no fixed points. By Theorem \ref{thm:gjt} we know that $(S(X),\mathbb{R},T)$ can be embedded in $(L(\mathbb{R}),\mathbb{R},\sigma)$. Thus, $(X,\mathbb{Z},\phi)$ can be embedded in $(L(\mathbb{R}),\mathbb{Z},\sigma_1)$.
\end{proof}

\medskip

\begin{remark}
Theorems \ref{jt}, \ref{thm:LR} and \ref{thm:gjt} reveal a difference between $\mathbb{R}$-actions and $\mathbb{Z}$-actions. The space $L(\mathbb{R})$ (under the $\mathbb{Z}$-translation) encompasses all $\mathbb{Z}$-actions, while (under the $\mathbb{R}$-translation) it is far from universal for $\mathbb{R}$-actions. As we see in Theorem \ref{thm:gjt}, the fixed-point set of an $\mathbb{R}$-action is the only obstacle and decides its embeddability in $L(\mathbb{R})$. In particular, any $\mathbb{R}$-action containing no fixed points can be embedded in $L(\mathbb{R})$. Unfortunately, this could never happen between $\mathbb{Z}$-actions and the Hilbert cube $[0,1]^\mathbb{Z}$ (under the $\mathbb{Z}$-translation). Moreover it turns out that mean dimension \cite{G,LW} becomes crucial for the \textit{embedding problem} of $\mathbb{Z}$-actions. As we mentioned previously, a key difference is that $L(\mathbb{R})$ has \textit{infinite} mean dimension while the Hilbert cube $[0,1]^\mathbb{Z}$ has \textit{finite} mean dimension. We do not plan to involve more detailed explanation in this paper because it is not the main topic in relation to our purpose here. For the latest progress in this direction we refer to \cite{GT,GQT}. For $\mathbb{R}$-actions we can prove (with an argument essentially the same as in \cite{GJT,JT}) the following theorem:
\begin{itemize}\item
A real flow $(X,\mathbb{R},T)$ can be embedded in $(L(\mathbb{R})^N,\mathbb{R},\sigma)$, where $N\in\mathbb{N}\cup\{+\infty\}$, if and only if the set of its fixed points $\{x\in X:T_tx=x,\,\forall t\in\mathbb{R}\}$ can be (topologically) embedded in $[0,1]^N$.
\end{itemize}
\end{remark}

\medskip

\section{Mean dimension of Bernstein spaces}
The goal of this section is to show that under the translation $\sigma$ the space $\mathcal{B}([-r,r])$, and hence the space $\mathcal{B}([-r,r])\cap L(\mathbb{R})$, has finite mean dimension (where $r$ is a nonnegative real number). The main theorem of this section is Theorem \ref{bernsteins}.

\medskip

Let us start with necessary notions as a continuation of Section 1. For a tempered distribution $f$ its \textbf{Fourier transforms} $\mathcal{F}(f)$ and $\overline{\mathcal{F}}(f)$ are defined by $$\langle\mathcal{F}(f),g\rangle=\langle f,\overline{\mathcal{F}}(g)\rangle,\quad\langle\overline{\mathcal{F}}(f),g\rangle=\langle f,\mathcal{F}(g)\rangle,$$ where $g$ ranges over all rapidly decreasing functions. For example, we have $\mathcal{F}(0)=0$, $\mathcal{F}(e^{2\pi\sqrt{-1}\tau\cdot})=\delta_\tau$ (i.e. the delta probability measure at the point $\tau\in\mathbb{R}$), and $\overline{\mathcal{F}}(\mathcal{F}(f))=\mathcal{F}(\overline{\mathcal{F}}(f))=f$.

We would like to remark here that for any real-valued bounded continuous function $f$ on $\mathbb{R}$ we may verify that $\supp(\mathcal{F}(f))$ must be a \textit{symmetric} subset of $\mathbb{R}$. In fact, for a compact interval $I\subset\mathbb{R}$ and any $f\in\mathcal{B}(I)$ we have $$\supp(\mathcal{F}(f))=\supp(\mathcal{F}(\overline{f}))=-\supp(\mathcal{F}(f)).$$ Namely $\mathcal{B}(I)$ is equal to $\mathcal{B}(I\cap(-I))$. So it is better to fix the notation $\mathcal{B}([-c,c])$ (where $c\ge0$) for simplicity, and apparently, Theorem \ref{bernsteins} and Theorem \ref{maintheorembernstein} are equivalent.

\begin{remark}
For a compact interval $[a,b]\subset\mathbb{R}$ we usually call the space of (complex-valued) bounded continuous functions on $\mathbb{R}$ band-limited in $[a,b]$ a \textbf{Bernstein space}. This is a Banach space (with respect to the $L^\infty$-norm over $\mathbb{R}$). Strictly speaking, the title of this section is somewhat misleading, as $\mathcal{B}^\mathbb{C}([a,b])$ and $\mathcal{B}([-c,c])$ ($c\ge0$) themselves are not Bernstein spaces. But they are compact subsets of a Bernstein space. It is worth mentioning that we are interested in the compact metric spaces $\mathcal{B}^\mathbb{C}([a,b])$ and $\mathcal{B}([-c,c])$ because they have a background of deep applications to dynamical systems, and were heavily used in the \textit{embedding problem} of $\mathbb{Z}^k$-actions (for details see \cite{GT,GQT}). Giving a dynamical classification is valuable. The value of their mean dimension was first announced in \cite{GT} (without a proof).
\end{remark}

\medskip

We shall need brief preparations in front of the main result of this section. Our tools (borrowed from \cite{GT} with a slight modification in the statement) are sampling (Lemma \ref{samplingtheorem}) and interpolation (Lemma \ref{interpolatingtheorem}).
\begin{lemma}\label{samplingtheorem}
Suppose that two positive real numbers $a$ and $d$ satisfy $2ad<1$ and $f\in\mathcal{B}([-a,a])$. If $f(dn)=0$ for all $n\in\mathbb{Z}$, then $f\equiv0$.
\end{lemma}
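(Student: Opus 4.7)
The plan is to derive $f \equiv 0$ from the vanishing of the samples by passing to the Fourier side, where the hypothesis $2ad < 1$ translates into the disjointness of certain translates of $\mathcal{F}(f)$. First, set $F = \mathcal{F}(f)$; by assumption this is a tempered distribution supported in $[-a,a]$. Since $f$ is bounded and continuous, the product $f \cdot \sum_{n \in \mathbb{Z}} \delta_{dn}$ is a well-defined tempered distribution, and continuity of $f$ forces
$$f \cdot \sum_{n \in \mathbb{Z}} \delta_{dn} \;=\; \sum_{n \in \mathbb{Z}} f(dn)\, \delta_{dn}.$$
The assumption that $f(dn) = 0$ for every $n \in \mathbb{Z}$ therefore reduces the right-hand side to the zero distribution.

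Next I would take Fourier transforms. The distributional Poisson summation formula gives $\mathcal{F}\bigl(\sum_{n} \delta_{dn}\bigr) = d^{-1} \sum_{k \in \mathbb{Z}} \delta_{k/d}$, and applying the convolution theorem yields
$$0 \;=\; \mathcal{F}\Bigl( f \cdot \sum_{n \in \mathbb{Z}} \delta_{dn} \Bigr) \;=\; F \ast \Bigl( d^{-1} \sum_{k \in \mathbb{Z}} \delta_{k/d} \Bigr) \;=\; d^{-1} \sum_{k \in \mathbb{Z}} F\bigl( \cdot - k/d \bigr),$$
so the $(1/d)$-periodization of $F$ vanishes as a tempered distribution.

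Finally I would exploit the support hypothesis. The translate $F(\cdot - k/d)$ is supported in $[-a + k/d,\, a + k/d]$, and the condition $2ad < 1$, equivalently $2a < 1/d$, guarantees that these intervals are pairwise disjoint as $k$ ranges over $\mathbb{Z}$. Consequently the vanishing of the sum forces each summand to vanish individually, so $F = 0$, and hence $f \equiv 0$ by Fourier inversion.

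The main obstacle I anticipate is the rigorous justification of the two distributional identities that drive the first two steps: that multiplication of the bounded continuous function $f$ by the Dirac comb produces $\sum_n f(dn)\delta_{dn}$, and that the Fourier transform of the Dirac comb is a rescaled Dirac comb (essentially Poisson summation in disguise). Both belong to Schwartz's theory of tempered distributions, and I would invoke \cite[Chapter 7]{Schwartz} together with the parallel computation in \cite[Lemma 2.3]{GT}, on which the present statement is explicitly modeled. Once these inputs are in place, the disjointness-of-supports step that concludes the argument is entirely elementary.
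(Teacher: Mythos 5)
The paper does not actually prove this lemma: it imports it from \cite{GT} (``borrowed from \cite{GT} with a slight modification in the statement''), so there is no internal argument to compare against. Your Poisson-summation proof is the standard argument for this uniqueness-of-sampling statement and it is correct: the periodization $\sum_k F(\cdot-k/d)$ of $F=\mathcal{F}(f)$ vanishes, the translates are supported in $[-a+k/d,\,a+k/d]$, and strict inequality $2ad<1$ makes these pairwise disjoint, so testing against functions supported in the open interval $(a-1/d,\,1/d-a)\supset[-a,a]$ kills $F$ and hence $f$. Two points deserve tightening. First, multiplication of a distribution by a merely bounded continuous function is not defined in general; here it is legitimate either because $\sum_n\delta_{dn}$ is a (tempered) measure, so pairing with $fg$ for Schwartz $g$ makes sense, or, more in the spirit of Schwartz's calculus, because $f\in\mathcal{B}([-a,a])$ is automatically the restriction of an entire function of exponential type (the generalized Paley--Wiener theorem the paper quotes in Section 6), hence smooth with bounded derivatives, so $f\cdot T$ is defined for any tempered $T$. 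Second, the exchange formula is cleanest when stated as $\mathcal{F}(S\ast T)=\mathcal{F}(S)\cdot\mathcal{F}(T)$ for $S$ compactly supported and $T$ tempered; applying it with $S=F$ and $T=d^{-1}\sum_k\delta_{k/d}$ and then inverting gives exactly the identity you use, and one should also note that $\sum_k F(\cdot-k/d)$ converges in $\mathcal{S}'$ because $F$ is a compactly supported distribution of finite order. With those routine justifications supplied, your argument is a complete and self-contained proof of a statement the paper leaves as a citation.
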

\begin{lemma}\label{interpolatingtheorem}
For every $\epsilon>0$ there exists a rapidly decreasing function $f:\mathbb{R}\to\mathbb{R}$ band-limited in $[-(1+\epsilon)/2,(1+\epsilon)/2]$ such that $f(0)=1$ and $f(n)=0$ for all nonzero $n\in\mathbb{Z}$.
\end{lemma}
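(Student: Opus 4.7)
\medskip

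The plan is to build $f$ as the product of the classical cardinal sine function
$$\operatorname{sinc}(t):=\frac{\sin(\pi t)}{\pi t}\qquad(\operatorname{sinc}(0):=1),$$
which already achieves the required interpolation conditions and is band-limited in $[-1/2,1/2]$ (indeed $\mathcal{F}(\operatorname{sinc})=\mathbf{1}_{[-1/2,1/2]}$, as one checks by computing the Fourier transform of $\mathbf{1}_{[-1/2,1/2]}$ and invoking Fourier inversion together with the evenness of $\operatorname{sinc}$), multiplied by a Schwartz-class ``mollifier'' that supplies the missing rapid decay at the cost of slightly enlarging the spectrum.

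First I would construct the mollifier. Pick any even non-negative $\phi\in C_c^\infty(\mathbb{R})$ with $\supp(\phi)\subset[-\epsilon/2,\epsilon/2]$ and $\int_{\mathbb{R}}\phi(\xi)\,d\xi>0$. Repeated integration by parts shows that the inverse Fourier transform $\overline{\mathcal{F}}(\phi)$ lies in the Schwartz class, and it is real and even because $\phi$ is. Normalize by setting
$$g(t):=\frac{\overline{\mathcal{F}}(\phi)(t)}{\int_{\mathbb{R}}\phi(\xi)\,d\xi}.$$
Then $g$ is real and rapidly decreasing, $g(0)=1$, and $\mathcal{F}(g)$, being a positive scalar multiple of $\phi$, is supported in $[-\epsilon/2,\epsilon/2]$.

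Next I would set $f:=\operatorname{sinc}\cdot g$ and verify the required properties. The interpolation conditions $f(0)=1$ and $f(n)=0$ for $n\in\mathbb{Z}\setminus\{0\}$, together with real-valuedness, are immediate from the corresponding properties of $\operatorname{sinc}$ and $g$. The Fourier support statement follows from the identity $\mathcal{F}(f)=\mathcal{F}(\operatorname{sinc})*\mathcal{F}(g)$ (legitimate since $g$ is Schwartz and $\operatorname{sinc}$ is a tempered distribution) combined with $\supp(A*B)\subset\supp(A)+\supp(B)$, giving $\supp(\mathcal{F}(f))\subset[-1/2,1/2]+[-\epsilon/2,\epsilon/2]=[-(1+\epsilon)/2,(1+\epsilon)/2]$. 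Finally, for rapid decay of $f$ and all its derivatives, I would apply the Leibniz rule: $\operatorname{sinc}$ and each of its derivatives $\operatorname{sinc}^{(k)}$ is bounded on $\mathbb{R}$, so every term in the expansion of $t^{n}f^{(j)}(t)$ is dominated by a constant times $t^{n}g^{(j-k)}(t)$, which decays faster than any polynomial because $g$ is Schwartz.

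The main obstacle, modest as it is, is a bookkeeping issue at the last step: since $\operatorname{sinc}$ itself is \emph{not} rapidly decreasing, one must be careful that only the boundedness of $\operatorname{sinc}$ and its derivatives---not any decay---enters the estimate, while the polynomial-beating decay is furnished entirely by $g$. Once this is recognized, the proof is routine.
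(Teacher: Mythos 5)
Your proof is correct. The paper itself gives no proof of this lemma---it is imported from \cite{GT} with only a change of normalization in the statement---and your construction, namely $f=\operatorname{sinc}\cdot g$ with $g=\overline{\mathcal{F}}(\phi)/\int\phi$ for a bump $\phi$ supported in $[-\epsilon/2,\epsilon/2]$, is the standard one used in that reference; the three delicate points (all derivatives of $\operatorname{sinc}$ are bounded, e.g.\ since $\operatorname{sinc}^{(k)}(t)=\int_{-1/2}^{1/2}(2\pi\sqrt{-1}\,\xi)^k e^{2\pi\sqrt{-1}t\xi}\,d\xi$; the support inclusion $\supp(\mathcal{F}(\operatorname{sinc})\ast\mathcal{F}(g))\subset[-1/2,1/2]+[-\epsilon/2,\epsilon/2]$; and the fact that the polynomial-beating decay is supplied entirely by $g$ via the Leibniz rule) are all handled correctly.
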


\medskip

\begin{theorem}\label{bernsteins}
Let $I,J,K,H\subset\mathbb{R}$ be compact intervals, where $K$ and $H$ are symmetric (i.e. $K=-K$ and $H=-H$). Let $a\le b$ and $c\ge0$ be real numbers. The following assertions are true:
\begin{enumerate}
\item$(\mathcal{B}^\mathbb{C}(I),\mathbb{R},\sigma)$ is topologically conjugate to $(\mathcal{B}^\mathbb{C}(J),\mathbb{R},\sigma)$ if and only if $I=J$ or $I=-J$.
\item$(\mathcal{B}(K),\mathbb{R},\sigma)$ is topologically conjugate to $(\mathcal{B}(H),\mathbb{R},\sigma)$ if and only if $K=H$.
\item$\mdim(\mathcal{B}^\mathbb{C}([a,b]),\mathbb{R},\sigma)=\mdim(\mathcal{B}^\mathbb{C}([a,b]),\mathbb{Z},\sigma_1)=2(b-a)$.
\item$\mdim(\mathcal{B}([-c,c]),\mathbb{R},\sigma)=\mdim(\mathcal{B}([-c,c]),\mathbb{Z},\sigma_1)=2c$.
\end{enumerate}
\end{theorem}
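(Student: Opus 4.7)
The plan is to handle the mean-dimension statements (3) and (4) by a sampling/interpolation pair drawn from Lemmas \ref{samplingtheorem} and \ref{interpolatingtheorem}, and to handle the classification statements (1) and (2) by exploiting the topological dimension of the sets $\mathrm{Per}_p:=\{f:\sigma_pf=f\}$ as a conjugacy invariant. For (4) I would fix $d\in(0,1/(2c))$ and look at the evaluation map $S_d:\mathcal{B}([-c,c])\to[-1,1]^\mathbb{Z}$, $f\mapsto(f(dn))_{n\in\mathbb{Z}}$: it is continuous (point-evaluation is continuous in the distance $D$), injective by Lemma \ref{samplingtheorem}, and intertwines $\sigma_d$ with the shift, hence is a topological embedding by compactness of $\mathcal{B}([-c,c])$. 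This yields $\mdim(\mathcal{B}([-c,c]),\mathbb{Z},\sigma_d)\le\mdim([-1,1]^\mathbb{Z},\mathbb{Z},\text{shift})=1$, and the scaling lemmas of Section 2 give $\mdim(\mathcal{B}([-c,c]),\mathbb{R},\sigma)\le 1/d$; letting $d\uparrow 1/(2c)$ yields the upper bound $2c$. For the matching lower bound, given $\epsilon>0$, I would apply Lemma \ref{interpolatingtheorem} and rescale by $d:=(1+\epsilon)/(2c)$ to produce a rapidly decreasing $\psi:\mathbb{R}\to\mathbb{R}$ band-limited in $[-c,c]$ with $\psi(dn)=\delta_{n,0}$, and then build the synthesis embedding
\[
\Psi:[-1,1]^\mathbb{Z}\to\mathcal{B}([-c,c]),\qquad(a_n)\mapsto C^{-1}\sum_{n\in\mathbb{Z}}a_n\psi(\cdot-dn),
\]
where $C:=\sup_t\sum_n|\psi(t-dn)|$ is finite by rapid decay; this map is continuous, injective (evaluation at $dm$ recovers $a_m/C$) and intertwines the shift with $\sigma_d$, so $\mdim(\mathcal{B}([-c,c]),\mathbb{Z},\sigma_d)\ge1$ and $\mdim(\mathcal{B}([-c,c]),\mathbb{R},\sigma)\ge 2c/(1+\epsilon)$, giving $\ge 2c$ upon $\epsilon\downarrow0$. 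Statement (3) follows the same scheme after a preliminary modulation by $e^{\pm\pi\sqrt{-1}(a+b)t}$ that reduces the sampling argument to a symmetric Bernstein space (so the complex version of Lemma \ref{samplingtheorem} applies coordinate-wise to real and imaginary parts) and produces a complex-valued interpolator in $\mathcal{B}^\mathbb{C}([a,b])$ from the real one; the target of the sampling is now $\mathbb{D}^\mathbb{Z}$ with $\mathbb{D}$ the closed complex unit disk, of mean dimension $2$, which yields $\mdim(\mathcal{B}^\mathbb{C}([a,b]),\mathbb{R},\sigma)=2(b-a)$.

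For (1) and (2) the ``if'' direction is immediate: the identity handles $I=J$, and when $I=-J$ the pointwise complex conjugation $f\mapsto\overline{f}$ is an equivariant homeomorphism $\mathcal{B}^\mathbb{C}(I)\to\mathcal{B}^\mathbb{C}(-I)$ (since $\supp(\mathcal{F}(\overline{f}))=-\supp(\mathcal{F}(f))$ and $\overline{\sigma_sf}=\sigma_s\overline{f}$), while in the real case (2) conjugation is the identity so one only needs $K=H$. For the ``only if'' direction, under any topological conjugacy $\Phi$ the closed invariant subset $\mathrm{Per}_p(\mathcal{B}^\mathbb{C}(I))$ maps homeomorphically onto $\mathrm{Per}_p(\mathcal{B}^\mathbb{C}(J))$, so their topological (Lebesgue covering) dimensions agree. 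A direct calculation identifies $\mathrm{Per}_p(\mathcal{B}^\mathbb{C}(I))$ with the $L^\infty(\mathbb{R})$-unit ball inside the finite-dimensional complex vector space of trigonometric polynomials with Fourier frequencies in $I\cap(1/p)\mathbb{Z}$; this is a compact convex body containing the $\ell^1$-ball $\{\sum_n|c_n|\le 1\}$ in its interior, hence has real dimension exactly $2\cdot|I\cap(1/p)\mathbb{Z}|$. Thus a conjugacy forces the step function $N(\tau):=|I\cap\tau\mathbb{Z}|$ to agree for $I$ and $J$ at every $\tau>0$ (in the real case the Hermitian constraint $c_{-n}=\overline{c_n}$ halves the dimensions but the resulting relation $|K\cap\tau\mathbb{Z}|=|H\cap\tau\mathbb{Z}|$ is the same).

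The remaining step is the combinatorial recovery of $I$ from $N$. As $\tau$ decreases from $+\infty$, $N$ jumps up by $1$ precisely at values of the form (endpoint of $I$)$/n$ that admit a new integer multiple into $I$, and jumps down by $1$ each time one leaves. If $0\notin I$, say $I=[a,b]$ with $0<a\le b$, the jump-up locus is exactly $\{b/n:n\ge1\}$ and the jump-down locus is $\{a/n:n\ge1\}$, whose maxima recover $b$ and $a$. If $0\in I$, writing $I=[-a',b']$, then $N$ is non-increasing in $\tau$ (only jump-ups occur) and the jump locus is the union of two geometric sequences $\{a'/n\}\cup\{b'/n\}$, from which the unordered pair $\{a',b'\}$ is extracted by identifying each sequence through its primitive generator (together with the multiplicities at coincidences $a'/n=b'/m$). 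In either case $I$ is determined up to the reflection $I\mapsto-I$, which settles (1); for (2) the intervals are already symmetric so this reflection is trivial and $K=H$. The step I expect to demand the most care is precisely this final combinatorial reconstruction---in particular, the disentanglement of the two geometric sequences when $0\in I$ and the verification that no two distinct intervals unrelated by sign can share the same counting function.
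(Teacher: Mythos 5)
Your treatment of (3) and (4) is essentially the paper's: the same sampling map $f\mapsto(f(dn))_n$ (Lemma \ref{samplingtheorem}) for the upper bound and the same synthesis $\sum_n a_n\psi(\cdot-dn)$ built from Lemma \ref{interpolatingtheorem} for the lower bound, combined with the rescaling identities $\mdim(X,\mathbb{R},T^\lambda)=|\lambda|\mdim(X,\mathbb{R},T)$ and $\mdim(X,\mathbb{R},T)=\mdim(X,\mathbb{Z},T_1)$. The one difference in this part is the reduction of $\mathcal{B}^\mathbb{C}([a,b])$ to the symmetric case: the paper uses the $1$-periodic (hence $\mathbb{Z}$-equivariant) modulation $f\mapsto e^{2\pi\sqrt{-1}t}f$ together with dilations, whereas your modulation $f\mapsto e^{\pi\sqrt{-1}(a+b)t}f$ is \emph{not} equivariant (it intertwines $\sigma_s$ only up to the global phase $e^{\pi\sqrt{-1}(a+b)s}$); this is harmless but needs one sentence of justification, namely that multiplication by a unimodular scalar is an isometry for each dynamical metric $d_r^\sigma$, so $\Widim_\epsilon$ and hence $\mdim$ are preserved. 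The same phase issue recurs in your complex synthesis map and should be absorbed the same way (or by twisting the shift on $\mathbb{D}^\mathbb{Z}$ by a rotation, which does not change its mean dimension).

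Where you genuinely depart from the paper is in (1) and (2). The paper first pins down $|I|=|J|$ via (3), then, assuming $I\neq\pm J$, picks a single frequency $\tau\in I$ with $|\tau|>\max_{s\in J}|s|$, feeds the exponential $e^{2\pi\sqrt{-1}\tau t}$ through the conjugacy, and uses the Fourier-series representation of a periodic band-limited function to force its image to be constant, contradicting injectivity. You instead use the topological dimension of the periodic-point sets $\mathrm{Per}_p$ as the conjugacy invariant, which yields the full counting function $N(\tau)=|I\cap\tau\mathbb{Z}|$ and reduces everything to a combinatorial reconstruction of $I$ from $N$. Both work: your identification of $\mathrm{Per}_p(\mathcal{B}^\mathbb{C}(I))$ with the $L^\infty$-unit ball of the trigonometric polynomials with frequencies in $I\cap(1/p)\mathbb{Z}$ is correct (the Fourier transform of a bounded continuous $p$-periodic function is a sum of deltas on $(1/p)\mathbb{Z}$, hence a finite sum when supported in $I$), and a convex body in $\mathbb{C}^k$ has covering dimension $2k$. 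Your approach buys more information (the whole function $N$, not just the length) at the cost of the reconstruction step, and it makes (2) follow without invoking (4); the paper's argument is shorter because a single well-chosen frequency already produces the contradiction. On the reconstruction itself: your worry about disentangling the two sequences $\{a'/n\}$ and $\{b'/n\}$ is real --- they can be nested (e.g.\ $\{1/n\}\subset\{2/n\}$), so the jump \emph{locations} alone do not suffice --- but the repair is simpler than tracking multiplicities: the largest discontinuity of $N$ is $\max(|a|,|b|)$ (both contributing jumps at that point are upward, so they cannot cancel), $\lim_{\tau\to0}\tau N(\tau)=|I|$ recovers the length, and whether $0\in I$ is read off from $\lim_{\tau\to\infty}N(\tau)\in\{0,1\}$; these three data already determine $I$ up to $I\mapsto-I$.
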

\begin{proof}
In the beginning of the proof we would like to remind the reader that the proofs of (4) and (3) are independent of (2) and (1), and we shall use (4) and (3) when proving (2) and (1).

\medskip

(2): The ``if'' part ``$\Longleftarrow$'' of the statement is obvious. To see the ``only if'' part ``$\Longrightarrow$'', we note that the topological conjugacy implies that the two real flows have the same mean dimension. By (4) and the fact that both $K$ and $H$ are symmetric compact intervals, we deduce $K=H$. The proof for (2) is simple (and moreover, it holds if and only if $|K|=|H|$ if and only if $\mdim(\mathcal{B}(K),\mathbb{R},\sigma)=\mdim(\mathcal{B}(H),\mathbb{R},\sigma)$), because a symmetric compact interval is fully decided by its length. However, this will be a problem for an arbitrary compact interval in the proof of (1).

\medskip

(1): We first note that the ``if'' part ``$\Longleftarrow$'' is easy, because if $I=J$ or $I=-J$ then $(\mathcal{B}^\mathbb{C}(I),\mathbb{R},\sigma)$ is topologically conjugate to $(\mathcal{B}^\mathbb{C}(J),\mathbb{R},\sigma)$ with the mapping: $f\mapsto f$ or $f\mapsto\overline{f}$. So we now prove the ``only if'' part ``$\Longrightarrow$''.

We suppose that $(\mathcal{B}^\mathbb{C}(I),\mathbb{R},\sigma)$ is topologically conjugate to $(\mathcal{B}^\mathbb{C}(J),\mathbb{R},\sigma)$. It follows that the two real flows must have the same value of mean dimension. According to (3) the compact intervals $I$ and $J$ must have the same length $|I|=|J|$. Let us assume $I\ne J$ and $I\ne-J$. This implies that there exist, without loss of generality, a positive real number $\tau\in I$ and a nonnegative real number $\gamma\in J$ satisfying that $\tau>\gamma$ and $J\subset[-\gamma,\gamma]$.

We consider the function $f(\cdot)=e^{2\pi\sqrt{-1}\tau\cdot}$ defined on $\mathbb{R}$. Note that $\mathcal{F}(f)=\delta_\tau$ which implies $f\in\mathcal{B}^\mathbb{C}(I)$. Clearly, $f$ is a periodic function, and its (fundamental) period is $T=1/\tau>0$. Thus, by equivariance, its image (under an embedding mapping) $g\in\mathcal{B}^\mathbb{C}(J)$ is also a periodic function such that $g(t+T)=g(t)$ for all $t\in\mathbb{R}$. We write the complex-valued function $g$ on $\mathbb{R}$ as $g=h+k\cdot\sqrt{-1}$, where $h$ and $k$ are real-valued functions on $\mathbb{R}$. Since $h=(g+\overline{g})/2$, we have $h\in\mathcal{B}(J\cup(-J))\subset\mathcal{B}([-\gamma,\gamma])$. The function $k=(g-\overline{g})/2\sqrt{-1}$ also belongs to $\mathcal{B}([-\gamma,\gamma])$. So the following argument, which deals with $h$, applies to $k$ as well.

Since $h\in\mathcal{B}([-\gamma,\gamma])$ and since $h(t+T)=h(t)$ for all $t\in\mathbb{R}$, the Fourier series representation of the periodic function $h$ on $\mathbb{R}$ (being in particular a restriction of a holomorphic function on $\mathbb{C}$) $$h(t)=\sum_{n=-\infty}^{+\infty}c_n\cdot e^{2\pi\sqrt{-1}nt/T},\quad\forall t\in\mathbb{R},$$ with $c_{-n}=\overline{c_n}$ for any $n\in\mathbb{Z}$, converges uniformly to $h$. In fact, we use here a generalization (for bounded continuous functions on the real line) of the classical Paley--Wiener theorem (we refer to Section 6 for its precise statement). It follows that $$\mathcal{F}(h)=c_0\mathcal{F}(1)+\sum_{n=1}^{+\infty}c_n\mathcal{F}(e^{2\pi\sqrt{-1}n\cdot/T})+\overline{c_n}\mathcal{F}(e^{-2\pi\sqrt{-1}n\cdot/T}).$$ Since $h$ is band-limited in $[-\gamma,\gamma]$, we have $c_n=0$ for any $n\in\mathbb{Z}$ with $|n|>\gamma T$. Since $0\le\gamma T=\gamma/\tau<1$, we finally deduce $c_n=0$ for all nonzero $n\in\mathbb{Z}$. This implies that $h$ is a constant function.

Similarly, so is $k$. Thus, we conclude that $g=h+k\cdot\sqrt{-1}$ (which is the image of $f$) must be a constant function. This, however, contradicts the injectivity of an embedding mapping.

Here we would like to remark shortly that by the sampling lemma we may show that the space $\mathcal{B}([-\gamma,\gamma])$ does not contain a function whose fundamental period is strictly less than $T/2$. But this is far from adequate for our argument (as the coefficient $1/2$ with $T$ is unpleasant). So we have to go through Fourier series representations of those functions in $\mathcal{B}([-\gamma,\gamma])$ in the proof.

\bigskip

(4) \& (3): First of all, we should note that both $\mathcal{B}^\mathbb{C}(\{0\})$ and $\mathcal{B}(\{0\})$ contain only constant functions, and thus, their mean dimension (under the translation $\sigma$) are equal to zero. As follows we build three lemmas which will reduce the statement of (4) and (3) to a standard case.
\begin{lemma}
For any $c>0$ and any $\lambda>0$ $$\mdim(\mathcal{B}([-\lambda c,\lambda c]),\mathbb{R},\sigma)=\lambda\cdot\mdim(\mathcal{B}([-c,c]),\mathbb{R},\sigma).$$
\end{lemma}
\begin{proof}
We omit the proof of this lemma because it is absolutely the same as the lemma below. A bridge is $\mdim(\mathcal{B}([-c,c]),\mathbb{R},\sigma^\lambda)$.
\end{proof}
\begin{lemma}
For any $a<b$ and any $\lambda>0$ $$\mdim(\mathcal{B}^\mathbb{C}([\lambda a,\lambda b]),\mathbb{R},\sigma)=\lambda\cdot\mdim(\mathcal{B}^\mathbb{C}([a,b]),\mathbb{R},\sigma).$$
\end{lemma}
\begin{proof}
For each $a<b$ and each $\lambda>0$ we consider the mapping: $$\rho:\mathcal{B}^\mathbb{C}([a,b])\to\mathcal{B}^\mathbb{C}([\lambda a,\lambda b]),\quad f(t)\mapsto f(\lambda t).$$ Note here that for any bounded continuous function $f:\mathbb{R}\to\mathbb{C}$ band-limited in $[a,b]$ we have $$\supp(\mathcal{F}(f(\lambda\cdot)))\subset\lambda\supp(\mathcal{F}(f))\subset[\lambda a,\lambda b].$$ Clearly, the mapping $\rho$ is continuous and injective, and hence (by compactness) is a topological embedding mapping. Moreover, $\rho$ is equivariant because we have for any $f\in\mathcal{B}^\mathbb{C}([a,b])$ and any $r\in\mathbb{R}$ $$\rho\circ(\sigma^\lambda)_rf(t)=\rho\circ\sigma_{\lambda r}f(t)=f(\lambda t+\lambda r)=f(\lambda(t+r))=\sigma_r\circ\rho f(t),\quad\forall t\in\mathbb{R}.$$ It follows that $(\mathcal{B}^\mathbb{C}([a,b]),\mathbb{R},\sigma^\lambda)$ can be embedded in $(\mathcal{B}^\mathbb{C}([\lambda a,\lambda b]),\mathbb{R},\sigma)$. This implies that $$\lambda\cdot\mdim(\mathcal{B}^\mathbb{C}([a,b]),\mathbb{R},\sigma)=\mdim(\mathcal{B}^\mathbb{C}([a,b]),\mathbb{R},\sigma^\lambda)\le\mdim(\mathcal{B}^\mathbb{C}([\lambda a,\lambda b]),\mathbb{R},\sigma).$$ This inequality also applies to $\lambda a<\lambda b$ and $1/\lambda>0$ (instead of $a<b$ and $\lambda>0$, respectively). Thus we conclude $$\mdim(\mathcal{B}^\mathbb{C}([\lambda a,\lambda b]),\mathbb{R},\sigma)=\mdim(\mathcal{B}^\mathbb{C}([a,b]),\mathbb{R},\sigma^\lambda)=\lambda\cdot\mdim(\mathcal{B}^\mathbb{C}([a,b]),\mathbb{R},\sigma).$$
\end{proof}
\begin{lemma}
For every $a<b$ and every $\tau\in\mathbb{R}$ $$\mdim(\mathcal{B}^\mathbb{C}([a+\tau,b+\tau]),\mathbb{R},\sigma)=\mdim(\mathcal{B}^\mathbb{C}([a,b]),\mathbb{R},\sigma).$$
\end{lemma}
\begin{proof}
If $\tau=0$ then the statement is trivial. So we fix $a<b$ and a nonzero real number $\tau$ arbitrarily. We first notice that for any $f\in\mathcal{B}^\mathbb{C}([a,b])$ the function $e^{2\pi\sqrt{-1}\cdot}f$ is band-limited in $$\supp(\mathcal{F}(e^{2\pi\sqrt{-1}\cdot})\ast\mathcal{F}(f))\subset\supp(\delta_1)+\supp(\mathcal{F}(f))\subset[1+a,1+b].$$ Since $(\mathcal{B}^\mathbb{C}([a,b]),\mathbb{Z},\sigma_1)$ can be embedded in $(\mathcal{B}^\mathbb{C}([a+1,b+1]),\mathbb{Z},\sigma_1)$ (which are $\mathbb{Z}$-actions) with the mapping $$\mathcal{B}^\mathbb{C}([a,b])\to\mathcal{B}^\mathbb{C}([a+1,b+1]),\quad f(t)\mapsto e^{2\pi\sqrt{-1}t}f(t),$$ which also applies to $a+1<b+1$ and $-1$ (instead of $a<b$ and $1$, respectively), we deduce $$\mdim(\mathcal{B}^\mathbb{C}([a,b]),\mathbb{Z},\sigma_1)=\mdim(\mathcal{B}^\mathbb{C}([a+1,b+1]),\mathbb{Z},\sigma_1)$$ which implies $$\mdim(\mathcal{B}^\mathbb{C}([a,b]),\mathbb{R},\sigma)=\mdim(\mathcal{B}^\mathbb{C}([a+1,b+1]),\mathbb{R},\sigma).$$ In contrast to the above two lemmas (where the proof showed an $\mathbb{R}$-equivariant embedding mapping), here the point is that $e^{2\pi\sqrt{-1}\cdot}$ is a periodic function and its fundamental period is $1$, which allows us to obtain an embedding mapping which is $\mathbb{Z}$-equivariant rather than $\mathbb{R}$-equivariant. Without loss of generality we now assume $\tau>0$. Applying the above equality together with the previous lemma we conclude
\begin{align*}
&\mdim(\mathcal{B}^\mathbb{C}([a+\tau,b+\tau]),\mathbb{R},\sigma)\\
=&\tau\cdot\mdim(\mathcal{B}^\mathbb{C}([1+a/\tau,1+b/\tau]),\mathbb{R},\sigma)\\
=&\tau\cdot\mdim(\mathcal{B}^\mathbb{C}([a/\tau,b/\tau]),\mathbb{R},\sigma)\\
=&\mdim(\mathcal{B}^\mathbb{C}([a,b]),\mathbb{R},\sigma).
\end{align*}
\end{proof}

\medskip

Thus, it suffices to prove the following assertion: $$\mdim(\mathcal{B}^\mathbb{C}([0,1/2]),\mathbb{Z},\sigma_1)=\mdim(\mathcal{B}([-1/2,1/2]),\mathbb{Z},\sigma_1)=1.$$
So in the remaining part of the proof we are going to show the following three inequalities, respectively, which will end our argument:
$$\mdim(\mathcal{B}^\mathbb{C}([0,1/2]),\mathbb{Z},\sigma_1)\le\mdim(\mathcal{B}([-1/2,1/2]),\mathbb{Z},\sigma_1),$$
$$\mdim(\mathcal{B}([-1/2,1/2]),\mathbb{Z},\sigma_1)\le1,$$
$$\mdim(\mathcal{B}^\mathbb{C}([0,1/2]),\mathbb{Z},\sigma_1)\ge1.$$

\medskip

To show the first inequality, we take $0<\epsilon<1/2$ arbitrarily. We consider a continuous mapping: $$H_\epsilon:\mathcal{B}^\mathbb{C}([\epsilon,1/2])\to\mathcal{B}([-1/2,1/2]),\quad f\mapsto\frac12(f+\overline{f}).$$ Note that for any $f\in\mathcal{B}^\mathbb{C}([\epsilon,1/2])$ the function $(f+\overline{f})/2$ is real-valued and is band-limited in $$\supp(\mathcal{F}(f)+\mathcal{F}(\overline{f}))\subset\supp(\mathcal{F}(f))\cup(-\supp(\mathcal{F}(f)))\subset[-1/2,-\epsilon]\cup[\epsilon,1/2].$$ Clearly, $H_\epsilon$ is equivariant, i.e. it satisfies $H_\epsilon\circ\sigma_1=\sigma_1\circ H_\epsilon$. For every $f,g\in\mathcal{B}^\mathbb{C}([\epsilon,1/2])$ with $H_\epsilon(f)=H_\epsilon(g)$ we have $f-g=\overline{g-f}$. Since $\supp(\mathcal{F}(f-g))\subset[\epsilon,1/2]$ and $\supp(\mathcal{F}(\overline{g-f}))\subset[-1/2,-\epsilon]$, we get $\mathcal{F}(f-g)=0$ and hence $f=g$. Therefore $H_\epsilon$ is injective. This shows that $(\mathcal{B}^\mathbb{C}([\epsilon,1/2]),\mathbb{Z},\sigma_1)$ can be embedded in $(\mathcal{B}([-1/2,1/2]),\mathbb{Z},\sigma_1)$ (by the mapping $H_\epsilon$). It follows that $$\mdim(\mathcal{B}^\mathbb{C}([\epsilon,1/2]),\mathbb{Z},\sigma_1)\le\mdim(\mathcal{B}([-1/2,1/2]),\mathbb{Z},\sigma_1)$$ which implies $$\mdim(\mathcal{B}^\mathbb{C}([0,(1-2\epsilon)/2]),\mathbb{Z},\sigma_1)\le\mdim(\mathcal{B}([-1/2,1/2]),\mathbb{Z},\sigma_1).$$ Thus, $$(1-2\epsilon)\cdot\mdim(\mathcal{B}^\mathbb{C}([0,1/2]),\mathbb{Z},\sigma_1)\le\mdim(\mathcal{B}([-1/2,1/2]),\mathbb{Z},\sigma_1).$$ Since $0<\epsilon<1/2$ is arbitrary, we get the first inequality.

\medskip

In order to show the second inequality, we need employ the sampling lemma. We fix an arbitrary $0<b<1/2$ and consider a continuous mapping: $$\Lambda_b:\mathcal{B}([-b,b])\to[-1,1]^\mathbb{Z},\quad f\mapsto f|_\mathbb{Z}.$$ Obviously, $\Lambda_b$ is $\mathbb{Z}$-equivariant, i.e. satisfying $\Lambda_b\circ\sigma_1=\sigma\circ\Lambda_b$. By Lemma \ref{samplingtheorem}, the mapping $\Lambda_b$ is injective. Thus, $(\mathcal{B}([-b,b]),\mathbb{Z},\sigma_1)$ can be embedded in $([-1,1]^\mathbb{Z},\mathbb{Z},\sigma)$ (with the mapping $\Lambda_b$). It follows that $$\mdim(\mathcal{B}([-b,b]),\mathbb{Z},\sigma_1)\le\mdim([-1,1]^\mathbb{Z},\mathbb{Z},\sigma)$$ which implies $$2b\cdot\mdim(\mathcal{B}([-1/2,1/2]),\mathbb{Z},\sigma_1)\le\mdim([-1,1]^\mathbb{Z},\mathbb{Z},\sigma)=1.$$ Since $0<b<1/2$ is arbitrary, we obtain the second inequality.

\bigskip

Finally, to prove the third inequality we shall need the interpolation lemma. We fix an arbitrary $\epsilon>0$. By Lemma \ref{interpolatingtheorem}, there is a rapidly decreasing function $f:\mathbb{R}\to\mathbb{R}$ band-limited in $[-(1+\epsilon)/2,(1+\epsilon)/2]$ satisfying that $f(0)=1$ and $f(n)=0$ for all $n\in\mathbb{Z}\setminus\{0\}$. In particular, $$|f(t)|\le\frac{C}{1+t^2},\quad\forall t\in\mathbb{R},$$ for some constant $C>0$. Set $$K=\max_{t\in\mathbb{R}}\sum_{n\in\mathbb{Z}}\frac{C}{1+(t-n)^2}<+\infty.$$ We define a mapping as follows: $$G_\epsilon:([0,1]^2)^\mathbb{Z}\to\mathcal{B}^\mathbb{C}([-(1+\epsilon)/2,(1+\epsilon)/2]),$$$$a=(a_{1,n},a_{2,n})_{n\in\mathbb{Z}}\mapsto G_\epsilon(a),$$$$G_\epsilon(a)(t)=\frac{1}{2K}\sum_{n\in\mathbb{Z}}(a_{1,n}+a_{2,n}\sqrt{-1})f(t-n),\quad\forall t\in\mathbb{R}.$$ It is clear that $G_\epsilon$ is $\mathbb{Z}$-equivariant, namely $G_\epsilon\circ\sigma=\sigma_1\circ G_\epsilon$.

To prove the injectivity of $G_\epsilon$, we take $a=(a_{1,n},a_{2,n})_{n\in\mathbb{Z}},b=(b_{1,n},b_{2,n})_{n\in\mathbb{Z}}\in([0,1]^2)^\mathbb{Z}$ and assume $G_\epsilon(a)=G_\epsilon(b)$ which means $$\sum_{n\in\mathbb{Z}}\left((a_{1,n}-b_{1,n})+(a_{2,n}-b_{2,n})\sqrt{-1}\right)\cdot f(t-n)=0,\quad\forall t\in\mathbb{R}.$$ For every $m\in\mathbb{Z}$ by letting $t=m$ in the above equality we have $a_{1,m}=b_{1,m}$ and $a_{2,m}=b_{2,m}$. Thus we conclude $a=b$, which shows that $G_\epsilon$ is injective.

To see the continuity of $G_\epsilon$, we fix $a=(a_{1,n},a_{2,n})_{n\in\mathbb{Z}}\in([0,1]^2)^\mathbb{Z}$. We take a sequence $\{a^{(k)}=(a^{(k)}_{1,n},a^{(k)}_{2,n})_{n\in\mathbb{Z}}\}_{k\in\mathbb{N}}$ in $([0,1]^2)^\mathbb{Z}$ and suppose $a^{(k)}\to a$ as $k\to\infty$. We fix $N\in\mathbb{N}$ arbitrarily. For any $\delta>0$ there exists $m\in\mathbb{N}$ sufficiently large such that $$\frac{C}{K}\cdot\sum_{|n|>m}\frac{1}{1+(t-n)^2}<\frac{\delta}{2},\quad\forall t\in[-N,N].$$ For such an $m\in\mathbb{N}$ there is $l\in\mathbb{N}$ sufficiently large satisfying $$|\!|a^{(k)}-a|\!|_{l^\infty([-m,m])}<\delta$$ for all $k\ge l$. Therefore we have for any $k\ge l$ and any $t\in[-N,N]$
\begin{align*}
&\left|G_\epsilon(a^{(k)})(t)-G_\epsilon(a)(t)\right|\\
\le&\frac{1}{2K}\sum_{n\in\mathbb{Z}}|(a^{(k)}_{1,n}-a_{1,n})+(a^{(k)}_{2,n}-a_{2,n})\sqrt{-1}|\cdot|f(t-n)|\\
\le&\frac{1}{2K}\cdot|\!|a^{(k)}-a|\!|_{l^\infty([-m,m])}\cdot\sum_{|n|\le m}|f(t-n)|+\frac{1}{K}\cdot\sum_{|n|>m}|f(t-n)|\\
\le&\frac{1}{2K}\cdot|\!|a^{(k)}-a|\!|_{l^\infty([-m,m])}\cdot\sum_{n\in\mathbb{Z}}\frac{C}{1+(t-n)^2}+\frac{C}{K}\cdot\sum_{|n|>m}\frac{1}{1+(t-n)^2}\\
\le&\frac{1}{2}\cdot|\!|a^{(k)}-a|\!|_{l^\infty([-m,m])}+\frac{\delta}{2}<\delta.
\end{align*}
This implies $$\lim_{k\to\infty}|\!|G_\epsilon(a^{(k)})-G_\epsilon(a)|\!|_{L^\infty([-N,N])}=0.$$ Since $N\in\mathbb{N}$ is arbitrary, we deduce that $G_\epsilon$ is continuous.

Thus, $(([0,1]^2)^\mathbb{Z},\mathbb{Z},\sigma)$ can be embedded in $(\mathcal{B}^\mathbb{C}([-(1+\epsilon)/2,(1+\epsilon)/2]),\mathbb{Z},\sigma_1)$ (with the mapping $G_\epsilon$). It follows that $$\mdim(([0,1]^2)^\mathbb{Z},\mathbb{Z},\sigma)\le\mdim(\mathcal{B}^\mathbb{C}([-(1+\epsilon)/2,(1+\epsilon)/2]),\mathbb{Z},\sigma_1).$$ As an immediate consequence
\begin{align*}
&2(1+\epsilon)\cdot\mdim(\mathcal{B}^\mathbb{C}([0,1/2]),\mathbb{Z},\sigma_1)\\
=&2(1+\epsilon)\cdot\mdim(\mathcal{B}^\mathbb{C}([-1/4,1/4]),\mathbb{Z},\sigma_1)\\
=&\mdim(\mathcal{B}^\mathbb{C}([-(1+\epsilon)/2,(1+\epsilon)/2]),\mathbb{Z},\sigma_1)\\
\ge&\mdim(([0,1]^2)^\mathbb{Z},\mathbb{Z},\sigma)=2.
\end{align*}
Since $\epsilon>0$ is arbitrary, we obtain $$\mdim(\mathcal{B}^\mathbb{C}([0,1/2]),\mathbb{Z},\sigma_1)\ge1.$$ This completes the proof.
\end{proof}

\medskip

\section{Construction of an explicit embedding mapping}
\subsection{Overview}
The aim of this section is to prove Theorem \ref{main}. The proof goes through an approach of harmonic analysis. Our strategy demonstrates an application of classical analysis to topological dynamical systems. It would be worth mentioning that \textit{convolution} will be used frequently in our method, which brings about a \textit{dynamical} interaction, with \textit{visible embedding mappings} to the reader, between \textit{abstract} topological objects and \textit{concrete} function spaces possessed of some good nature (i.e. expressing all the abstract flows as a family of uniformly bounded analytic functions having a uniformly band-limited property and sharing a uniform Lipschitz constant). Our technique has some novelty substantially different from the Baire category framework (which, in particular, was extensively applied when embedding a class of actions in a \textit{finite} mean dimensional space), and applies to an example of a countable (infinite) product of finite mean dimensional spaces (which, however, seems to be hopeless at dealing with a finite product of finite mean dimensional spaces).

To begin with, let us fix a real flow $(X,\mathbb{R},T)$. We shall embed $(X,\mathbb{R},T)$ in the universal real flow $\left(\prod_{m=0}^{+\infty}\prod_{n=0}^m\left(\mathcal{B}([-n/2-1,-n/2]\cup[n/2,n/2+1])\cap L(\mathbb{R})\right),\mathbb{R},\sigma\right)$ with an explicitly built embedding mapping. We assume here $\alpha=1/2$ and $\beta=1$ only for the sake of convenience, as to which the proof for any positive real numbers $\alpha<\beta$ is being similar.

The procedure is going to be fulfilled within three steps. We shrink in each step the universal space, while preserving all the required properties, established in the previous step. The task of each step is indicated precisely in its title. The following diagram is a sketch of the route:

$$X\longrightarrow\textrm{ (a compact invariant subset of) }C(\mathbb{R})^\mathbb{N}$$$$\longrightarrow\prod_{m=0}^{+\infty}\prod_{n=0}^m\mathcal{B}([-n/2-1,-n/2]\cup[n/2,n/2+1])$$$$\longrightarrow\prod_{m=0}^{+\infty}\prod_{n=0}^m\left(\mathcal{B}([-n/2-1,-n/2]\cup[n/2,n/2+1])\cap L(\mathbb{R})\right)$$$$$$$$\textrm{Figure 1: Outline.}$$

\bigskip

\noindent\textbf{Setting.}
We denote by $\textbf{D}$ a compatible metric on the product space $C(\mathbb{R})^\mathbb{N}$: $$\textbf{D}(f,g)=\sum_{n\in\mathbb{N}}\sum_{N\in\mathbb{N}}\frac{|\!|f_n-g_n|\!|_{L^\infty([-N,N])}}{2^{n+N}},$$ where $f=(f_n)_{n\in\mathbb{N}},g=(g_n)_{n\in\mathbb{N}}\in C(\mathbb{R})^\mathbb{N}$. We note that each element in $C(\mathbb{R})^\mathbb{N}$ is identified with a continuous function $f:\mathbb{R}\to[-1,1]^\mathbb{N}$. Moreover, when dealing with complex-valued continuous functions on $\mathbb{R}$ we shall automatically adapt the distance $\textbf{D}$ to the complex context.

\medskip

\subsection{Embedding $(X,\mathbb{R},T)$ in $(C(\mathbb{R})^\mathbb{N},\mathbb{R},\sigma)$}
The way to this target is standard. We reproduce it in this step for completeness.

Let $\phi:X\to\phi(X)\subset[0,1]^\mathbb{N}$ be a homeomorphism (i.e. a topological embedding) of the compact metric space $X$ into $[0,1]^\mathbb{N}$. We define a mapping $$\Phi_1:X\to C(\mathbb{R})^\mathbb{N},\quad x\mapsto\Phi_1(x);\quad\quad\Phi_1(x)(t)=\phi(T_tx),\quad\forall t\in\mathbb{R}.$$ Note here that for every $x\in X$ its image $\Phi_1(x):\mathbb{R}\to[0,1]^\mathbb{N}$ is indeed a continuous function.

Obviously, the mapping $\Phi_1:X\to C(\mathbb{R})^\mathbb{N}$ is injective. To see that $\Phi_1$ is continuous, we take a sequence of points $x_n$ ($n\in\mathbb{N}$) in $X$, tending to some $x\in X$ as $n\to+\infty$, and a compact subset $A$ of $\mathbb{R}$. When $n\in\mathbb{N}$ is large enough, the distance between any two points $(t,x_n)$ and $(t,x)$ in $A\times X$ is sufficiently close to zero. Since $A\times X$ and $X$ are compact, $T$ and $\phi$ are uniformly continuous on $A\times X$ and $X$, respectively. It follows that the distance between $T_tx_n$ and $T_tx$ in $X$ is sufficiently close to zero as well, for all $t\in A$. This implies that $\Phi_1(x_n)(t)=\phi(T_tx_n)$ is sufficiently close to $\Phi_1(x)(t)=\phi(T_tx)$ for all $t\in A$. Thus, the sequence of continuous functions $\Phi_1(x_n)$ converges uniformly to $\Phi_1(x)$ on $A\subset\mathbb{R}$ as $n\to+\infty$, which shows that the mapping $\Phi_1$ is continuous. Since $X$ is compact and since $\Phi_1$ is continuous and one-to-one, $\Phi_1:X\to\Phi_1(X)$ is a homeomorphism.

For every $r\in\mathbb{R}$ and every $x\in X$ we have $$\Phi_1(T_rx)(t)=\phi(T_t(T_rx))=\phi(T_{t+r}x)=\Phi_1(x)(t+r)=\sigma_r(\Phi_1(x))(t)$$ for all $t\in\mathbb{R}$, which means $\Phi_1\circ T_r=\sigma_r\circ\Phi_1$ for any $r\in\mathbb{R}$, i.e. $\Phi_1$ is equivariant. Therefore $(X,\mathbb{R},T)$ can be embedded in $(C(\mathbb{R})^\mathbb{N},\mathbb{R},\sigma)$ with the mapping $\Phi_1$.

\medskip

\subsection{Embedding the translation on any compact invariant subset of $C(\mathbb{R})^\mathbb{N}$ in $\left(\prod_{m=0}^{+\infty}\prod_{n=0}^m\mathcal{B}([-n/2-1,-n/2]\cup[n/2,n/2+1]),\mathbb{R},\sigma\right)$}
This step will be accomplished with the following construction and lemmas. As the space $C(\mathbb{R})^\mathbb{N}$ is not compact, we shall embed any of its compact invariant subsets (namely $\Phi_1(X)$) rather than itself. In fact, embedding $C(\mathbb{R})^\mathbb{N}$ itself is also achievable with a little bit more effort. Nevertheless, we need not deal with it. Now let us fix a compact invariant subset of $C(\mathbb{R})^\mathbb{N}$, which we denote still by $X$ (instead of $\Phi_1(X)$).

\medskip

For every $n\in\mathbb{Z}$ we take a continuous function $\xi_n:\mathbb{R}\to[0,1]$ defined by $$\xi_n(t+n/2)=\begin{cases}0,\quad&t\in(-\infty,-1/2)\cup(1/2,+\infty)\\1+2t,\quad&t\in[-1/2,0]\\1-2t,\quad&t\in[0,1/2]\end{cases}$$ and let $\varphi_n=\overline{\mathcal{F}}(\xi_n)$. For any integer $n$ it is clear that $\varphi_n:\mathbb{R}\to\mathbb{C}$ is a bounded continuous function and satisfies $$\supp(\mathcal{F}(\varphi_n))=\supp(\mathcal{F}(\overline{\mathcal{F}}(\xi_n)))=\supp(\xi_n)=[(n-1)/2,(n+1)/2].$$ Moreover, we may verify $$\sum_{n=-\infty}^{+\infty}\xi_n(t)=1,\quad\forall t\in\mathbb{R}.$$ We set for each $n\in\mathbb{Z}$ $$k_n=\int_{-\infty}^{+\infty}\left|\varphi_n(s)\right|ds=\int_{-\infty}^{+\infty}\left|\overline{\mathcal{F}}(\xi_n)(s)\right|ds.$$ Notice that $$0<k_n<+\infty,\quad\forall n\in\mathbb{Z}.$$ We guarantee that each $k_n$ is positive because $\xi_n\in C(\mathbb{R})$ is not the constant function zero, while each $k_n$ is a finite number because the continuous function $\xi_n:\mathbb{R}\to[0,1]$ is supported in a compact interval.

\medskip

Note that here we stick to the case $\beta=2\alpha>0$. In general, for $0<\alpha<\beta$ the intervals $(n\alpha,n\alpha+\beta)$, where $n$ ranges over $\mathbb{Z}$, still form a countable open cover of the real line $\mathbb{R}$. If $\beta<2\alpha$ then the roof of any tent function $\xi_n$ that we chose should be replaced by a segment (instead of a point) at the level $1$. If $\beta>2\alpha$, then there are \textit{removable} elements in the countable open cover of $\mathbb{R}$, and we thus select a subcover of $\mathbb{R}$, from which, we need ensure that no members can be removed.

As we mentioned previously, convolution will be used frequently in our method. Such an operation is (quite) mild and is (in some sense) easy to control. In particular, it has some ``low-pass filter'' nature. A simple but important fact is that it is able to dominate the support of the Fourier transform (under this operation) with that of the participants. Besides, we recall that for any bounded continuous function $f:\mathbb{R}\to\mathbb{R}$ the set $\supp(\mathcal{F}(f))$ must be symmetric in $\mathbb{R}$. So we adopt a process with the help of $\mathcal{B}^\mathbb{C}(\cdot)$ which is more flexible than $\mathcal{B}(\cdot)$.

\medskip

\begin{lemma}
For any $n\in\mathbb{Z}$ and any $h\in C(\mathbb{R})$ we have $$\frac{h\ast\varphi_n}{k_n}\in\mathcal{B}^\mathbb{C}([(n-1)/2,(n+1)/2]),$$ where $h\ast\varphi_n$ denotes the convolution: $$h\ast\varphi_n(t)=\int_{-\infty}^{+\infty}h(t-s)\varphi_n(s)ds,\quad\forall t\in\mathbb{R}.$$
\end{lemma}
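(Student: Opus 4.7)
The plan is to verify the two conditions defining membership in $\mathcal{B}^\mathbb{C}([(n-1)/2,(n+1)/2])$ for the function $u:=(h\ast\varphi_n)/k_n$: a uniform pointwise bound by $1$, and a Fourier transform supported in $[(n-1)/2,(n+1)/2]$.

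The norm bound is essentially Young's inequality in one line: since $h$ takes values in $[-1,1]$,
\begin{equation*}
|h\ast\varphi_n(t)|\le\int_{-\infty}^{+\infty}|h(t-s)||\varphi_n(s)|\,ds\le\int_{-\infty}^{+\infty}|\varphi_n(s)|\,ds=k_n,
\end{equation*}
so $|u|\le 1$ on $\mathbb{R}$. Continuity of $h\ast\varphi_n$ is then a standard dominated convergence argument with the integrable majorant $|\varphi_n|$, and it uses precisely the finiteness $k_n<+\infty$ guaranteed by the compact support and piecewise smoothness of $\xi_n$ (so that $\varphi_n=\overline{\mathcal{F}}(\xi_n)$ decays at least like $1/t^2$).

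For the Fourier support the core identity is the convolution theorem $\mathcal{F}(h\ast\varphi_n)=\mathcal{F}(h)\cdot\mathcal{F}(\varphi_n)=\mathcal{F}(h)\cdot\xi_n$ in the sense of tempered distributions, where the second equality uses $\mathcal{F}\circ\overline{\mathcal{F}}=\mathrm{id}$. Because $\xi_n$ vanishes off $[(n-1)/2,(n+1)/2]$, for every rapidly decreasing $g$ whose support misses that interval, $\xi_n g\equiv 0$, hence $\langle\mathcal{F}(h\ast\varphi_n),g\rangle=\langle\mathcal{F}(h),\xi_n g\rangle=0$, which is exactly the definition of $\supp(\mathcal{F}(u))\subset[(n-1)/2,(n+1)/2]$. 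The only delicate point, and thus the main obstacle in a self-contained treatment, is justifying this convolution identity as tempered distributions, because $h$ lies merely in $L^\infty$ (tempered but not integrable) and $\xi_n$ is only continuous rather than smooth. This is handled either by invoking the general Schwartz framework cited in \cite[Chapter 7]{Schwartz}, or concretely by unwinding $\langle\mathcal{F}(h\ast\varphi_n),g\rangle=\langle h\ast\varphi_n,\overline{\mathcal{F}}(g)\rangle$ as a double integral $\int\!\int h(t-s)\varphi_n(s)\overline{\mathcal{F}}(g)(t)\,ds\,dt$, applying Fubini (legitimate since $h\in L^\infty$, $\varphi_n\in L^1$, and $\overline{\mathcal{F}}(g)$ is rapidly decreasing, so the integrand is absolutely integrable on $\mathbb{R}^2$), and then collapsing the inner integral via $\mathcal{F}\circ\overline{\mathcal{F}}=\mathrm{id}$ to arrive at $\langle\mathcal{F}(h),\xi_n g\rangle=0$.
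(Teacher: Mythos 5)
Your argument is correct and follows essentially the same route as the paper: the bound $\|(h\ast\varphi_n)/k_n\|_{L^\infty(\mathbb{R})}\le 1$ by pulling $\|h\|_{L^\infty(\mathbb{R})}\le 1$ out of the integral defining the convolution, and the support containment from $\mathcal{F}(h\ast\varphi_n)=\mathcal{F}(h)\cdot\xi_n$ together with $\supp(\xi_n)=[(n-1)/2,(n+1)/2]$. The only difference is one of detail: the paper simply invokes $\supp(\mathcal{F}(h\ast\varphi_n))\subset\supp(\mathcal{F}(h))\cap\supp(\mathcal{F}(\varphi_n))$ without further justification, whereas you unwind the distributional pairing and legitimize the exchange of integrals by Fubini.
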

\begin{proof}
We fix an integer $n$ and a continuous function $h:\mathbb{R}\to[-1,1]$. Clearly, the function $h\ast\varphi_n:\mathbb{R}\to\mathbb{C}$ is continuous. Since $|\!|h|\!|_{L^\infty(\mathbb{R})}\le1$, we have for all $t\in\mathbb{R}$ $$\left|h\ast\varphi_n(t)\right|=\left|\int_{-\infty}^{+\infty}h(t-s)\varphi_n(s)ds\right|\le\int_{-\infty}^{+\infty}|\varphi_n(s)|ds=k_n$$ which implies $$\left|\!\left|\frac{h\ast\varphi_n}{k_n}\right|\!\right|_{L^\infty(\mathbb{R})}\le1.$$ Since $\supp(\mathcal{F}(\varphi_n))=[(n-1)/2,(n+1)/2]$, we deduce $$\supp(\mathcal{F}(h\ast\varphi_n))\subset\supp(\mathcal{F}(h))\cap\supp(\mathcal{F}(\varphi_n))\subset[(n-1)/2,(n+1)/2],$$ as required.
\end{proof}

\medskip

We now define a mapping $$\Phi_2:C(\mathbb{R})^\mathbb{N}\to\prod_{(l,n)\in\mathbb{N}\times\mathbb{Z}}\mathcal{B}^\mathbb{C}([(n-1)/2,(n+1)/2]),$$$$(f_l)_{l\in\mathbb{N}}\mapsto\left(\frac{f_l\ast\varphi_n}{k_n}\right)_{(l,n)\in\mathbb{N}\times\mathbb{Z}}.$$ We shall prove that $\Phi_2:X\to\Phi_2(X)$ is an equivariant homeomorphism, which will imply that $(X,\mathbb{R},\sigma)$ can be embedded in $\left(\prod_{(l,n)\in\mathbb{N}\times\mathbb{Z}}\mathcal{B}^\mathbb{C}([(n-1)/2,(n+1)/2]),\mathbb{R},\sigma\right)$ with the mapping $\Phi_2$.

\medskip

\begin{lemma}
The mapping $\Phi_2:X\to\Phi_2(X)$ is an equivariant homeomorphism.
\end{lemma}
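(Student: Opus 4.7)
The plan is to verify three properties that together force $\Phi_2$ restricted to $X$ to be an equivariant topological embedding: equivariance, continuity, and injectivity. Since $X$ is compact and the target product of Bernstein spaces is Hausdorff, the resulting continuous injection $\Phi_2|_X$ is automatically a homeomorphism onto its image, so no extra argument is needed at the end.

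Equivariance is a one-line change of variable: translation invariance of Lebesgue measure gives $((\sigma_r f_l)\ast\varphi_n)(t)=(f_l\ast\varphi_n)(t+r)$ for every $r,l,n,t$, from which $\Phi_2\circ\sigma_r=\sigma_r\circ\Phi_2$ follows componentwise. Continuity reduces, via the metric $\mathbf{D}$ and the analogous metric on each Bernstein factor, to checking that for each fixed pair $(l,n)$ the map $f\mapsto f\ast\varphi_n$ sends locally uniform convergence on $\mathbb{R}$ to itself. Given $f^{(k)}\to f$ in $C(\mathbb{R})^{\mathbb{N}}$ and a window $[-N,N]$, I would split the defining integral at a threshold $|s|=M$: the tail is bounded by $2\int_{|s|>M}|\varphi_n(s)|\,ds$, which tends to $0$ as $M\to\infty$ since $k_n<+\infty$, and the remaining bulk is dominated by $\|f_l^{(k)}-f_l\|_{L^{\infty}([-N-M,N+M])}\cdot k_n$, which tends to $0$ as $k\to\infty$. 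This is a standard approximate-identity-style estimate.

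The core difficulty is injectivity. Suppose $\Phi_2(f)=\Phi_2(g)$ and set $h_l=f_l-g_l$; then $h_l$ is a real-valued bounded continuous function with $h_l\ast\varphi_n\equiv 0$ for every $n\in\mathbb{Z}$ and every $l\in\mathbb{N}$, and the task is to deduce $h_l\equiv 0$. The driving heuristic is the partition-of-unity identity $\sum_{n}\xi_n\equiv 1$ on the Fourier side, which makes $\sum_n\varphi_n$ play the role of a Dirac mass at $0$ and suggests $h_l=h_l\ast\delta_0=\sum_n h_l\ast\varphi_n=0$. To turn this into a rigorous argument I would test against an arbitrary rapidly decreasing function $\psi$, decompose $\psi=\sum_n\psi\xi_n$ (locally finite, since the supports of the $\xi_n$ overlap only pairwise), and apply $\overline{\mathcal{F}}$ term by term via $\overline{\mathcal{F}}(\psi\xi_n)=\overline{\mathcal{F}}(\psi)\ast\varphi_n$. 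Pairing $h_l$ against each finite partial sum and moving a convolution across the pairing via Fubini, each term reduces to an integral against $h_l\ast\check{\varphi_n}$; since $\xi_n$ is real one has $\check{\varphi_n}=\overline{\varphi_n}$, and since $h_l$ is real, $h_l\ast\overline{\varphi_n}=\overline{h_l\ast\varphi_n}=0$. Every summand vanishes, and a dominated-convergence argument as the truncation grows yields $\langle\mathcal{F}(h_l),\psi\rangle=0$ for every Schwartz $\psi$, hence $\mathcal{F}(h_l)=0$ and thus $h_l\equiv 0$ by continuity.

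The step I expect to consume most of the technical effort is precisely this passage to the limit in the infinite sum, since the tent partition $\{\xi_n\}$ is only continuous rather than smooth; one cannot multiply the tempered distribution $\mathcal{F}(h_l)$ directly by $\xi_n$, and the truncations $\Xi_N=\sum_{|n|\le N}\xi_n$ are piecewise linear trapezoids whose inverse Fourier transforms are not in $L^1$, so some care is required to keep every pairing on the way well-defined. All of these technicalities are internal to the injectivity step; equivariance, continuity, and the final upgrade to a homeomorphism via compactness of $X$ are straightforward in comparison.
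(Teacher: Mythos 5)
Your proposal is correct and follows essentially the same route as the paper: equivariance by change of variables, continuity by splitting the convolution integral into a tail controlled by $k_n<+\infty$ and a bulk controlled by locally uniform convergence, and injectivity via the frequency-side partition of unity $\sum_n\xi_n\equiv1$ so that $\sum_n\varphi_n$ acts as $\delta_0$. The only difference is that the paper disposes of injectivity in one line ($(f_l-g_l)\ast\sum_n\varphi_n=0$ together with $\overline{\mathcal{F}}(1)=\delta_0$), whereas you carry out the dual verification against Schwartz test functions in detail; your version is a legitimate, more careful rendering of the same idea.
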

\begin{proof}
For any $n\in\mathbb{Z}$, any $h\in C(\mathbb{R})$ and any $r\in\mathbb{R}$
\begin{align*}
\sigma_r(h\ast\varphi_n)(t)&=\int_{-\infty}^{+\infty}h(t+r-s)\varphi_n(s)ds\\&=\int_{-\infty}^{+\infty}\sigma_rh(t-s)\varphi_n(s)ds\\&=(\sigma_rh)\ast\varphi_n(t)
\end{align*}
for all $t\in\mathbb{R}$. This shows that $\sigma_r\circ\Phi_2=\Phi_2\circ\sigma_r$ for every $r\in\mathbb{R}$.

We note that to show that $\Phi_2:X\to\Phi_2(X)$ is a homeomorphism, it suffices to prove that $\Phi_2$ is continuous and injective (because $X$ is a compact metric space).

To see that $\Phi_2$ is continuous, we take $f=(f_j)_{j\in\mathbb{N}}\in C(\mathbb{R})^\mathbb{N}$. We fix $(i,n)\in\mathbb{N}\times\mathbb{Z}$ arbitrarily. For any $\epsilon>0$ we choose:
\begin{itemize}
\item $A>0$ sufficiently large such that $$\int_{\mathbb{R}\setminus[-A,A]}\left|\varphi_n(t)\right|dt<\epsilon/8;$$
\item $N\in\mathbb{N}$ sufficiently large such that $$|\!|u-v|\!|_{L^\infty([-N,N])}<\epsilon/2\Longrightarrow D(u,v)<\epsilon,\quad\forall u,v\in C(\mathbb{R});$$
\item $\delta>0$ sufficiently small such that $$\textbf{D}(f,g)<\delta\Longrightarrow|\!|f_i-g_i|\!|_{L^\infty([-A-N,A+N])}<\epsilon/4k_n,\quad\forall g=(g_j)_{j\in\mathbb{N}}\in C(\mathbb{R})^\mathbb{N}.$$
\end{itemize}
It follows that if $g=(g_j)_{j\in\mathbb{N}}\in C(\mathbb{R})^\mathbb{N}$ satisfies $\textbf{D}(f,g)<\delta$ then for any $t\in[-N,N]$ we have
\begin{align*}
\left|f_i\ast\varphi_n(t)-g_i\ast\varphi_n(t)\right|&\le\int_{-\infty}^{+\infty}\left|f_i(t-s)-g_i(t-s)\right|\cdot\left|\varphi_n(s)\right|ds\\
&\le2\int_{\mathbb{R}\setminus[-A,A]}\left|\varphi_n(s)\right|ds+\int_{-A}^A\left|f_i(t-s)-g_i(t-s)\right|\cdot\left|\varphi_n(s)\right|ds\\
&\le2\cdot(\epsilon/8)+(\epsilon/4)\cdot\frac{1}{k_n}\int_{-A}^A\left|\varphi_n(s)\right|ds<\epsilon/2
\end{align*}
which implies $$|\!|f_i\ast\varphi_n-g_i\ast\varphi_n|\!|_{L^\infty([-N,N])}<\epsilon/2.$$ Thus, $$D(f_i\ast\varphi_n,g_i\ast\varphi_n)<\epsilon.$$ Since $(i,n)\in\mathbb{N}\times\mathbb{Z}$ and the function $f\in C(\mathbb{R})^\mathbb{N}$ are arbitrary, we deduce that $\Phi_2$ is continuous.

To verify that $\Phi_2$ is injective, we take $f=(f_j)_{j\in\mathbb{N}},g=(g_j)_{j\in\mathbb{N}}\in C(\mathbb{R})^\mathbb{N}$ and assume $\Phi_2(f)=\Phi_2(g)$. It follows that $f_l\ast\varphi_n=g_l\ast\varphi_n$ for all $(l,n)\in\mathbb{N}\times\mathbb{Z}$. This implies $$(f_l-g_l)\ast\sum_{n=-\infty}^{+\infty}\varphi_n=0,\quad\forall l\in\mathbb{N}.$$ Since $\sum_{n=-\infty}^{+\infty}\xi_n=1$ and $\overline{\mathcal{F}}(1)=\delta_0$ (i.e. the delta probability measure at the origin), we conclude $f_l-g_l=0$ for any $l\in\mathbb{N}$. Thus, $f=g$.
\end{proof}

\medskip

Next we indicate that $\left(\prod_{(l,n)\in\mathbb{N}\times\mathbb{Z}}\mathcal{B}^\mathbb{C}([(n-1)/2,(n+1)/2]),\mathbb{R},\sigma\right)$ can be embedded in $\left(\prod_{(l,n)\in\mathbb{N}\times\mathbb{Z}}\mathcal{B}([-(n+1)/2,-(n-1)/2]\cup[(n-1)/2,(n+1)/2])^2,\mathbb{R},\sigma\right)$ with the mapping $\Phi_3$ defined as follows: $$\prod_{(l,n)\in\mathbb{N}\times\mathbb{Z}}\mathcal{B}^\mathbb{C}\left(\left[\frac{n-1}{2},\frac{n+1}{2}\right]\right)\to\prod_{(l,n)\in\mathbb{N}\times\mathbb{Z}}\mathcal{B}\left(\left[-\frac{n+1}{2},-\frac{n-1}{2}\right]\bigcup\left[\frac{n-1}{2},\frac{n+1}{2}\right]\right)^2,$$$$\left(g_{(l,n)}\right)_{(l,n)\in\mathbb{N}\times\mathbb{Z}}\mapsto\left(\frac{g_{(l,n)}+\overline{g_{(l,n)}}}{2},\frac{g_{(l,n)}-\overline{g_{(l,n)}}}{2}\right)_{(l,n)\in\mathbb{N}\times\mathbb{Z}}.$$ Thus, this step (subsection) concludes by reenumerating the indices of the (latter) product space as well as the embedding mapping $\Phi_3$ affiliated.

\medskip

\subsection{Embedding $\left(\prod_{m=0}^{+\infty}\prod_{n=0}^m\mathcal{B}([-n/2-1,-n/2]\cup[n/2,n/2+1]),\mathbb{R},\sigma\right)$ in the universal real flow $\left(\prod_{m=0}^{+\infty}\prod_{n=0}^m\left(\mathcal{B}([-n/2-1,-n/2]\cup[n/2,n/2+1])\cap L(\mathbb{R})\right),\mathbb{R},\sigma\right)$}
This is the final step.

\medskip

Throughout this subsection we put $q_j=1/(j+1)$ for each $j\in\mathbb{N}$. In order to make all those functions attain to the Lipschitz constant $1$, we define for every $(l,j)\in\mathbb{N}\times\mathbb{N}$ a mapping $H_l^j:C(\mathbb{R})^\mathbb{N}\to L(\mathbb{R})$ by $$H_l^j(f)(t)=\frac12\int_t^{t+q_j}f_l(s)ds,\quad\forall f=(f_l)_{l\in\mathbb{N}}\in C(\mathbb{R})^\mathbb{N},\quad\forall t\in\mathbb{R}.$$ Note here that for any $(l,j)\in\mathbb{N}\times\mathbb{N}$ and any $f=(f_l)_{l\in\mathbb{N}}\in C(\mathbb{R})^\mathbb{N}$ we have indeed $|\!|H_l^j(f)|\!|_{L^\infty(\mathbb{R})}<1$ and $$\left|H_l^j(f)(s)-H_l^j(f)(t)\right|=\frac12\left|\int_s^tf_l(u)du-\int_{s+q_j}^{t+q_j}f_l(u)du\right|\le\left|s-t\right|,\quad\forall s,t\in\mathbb{R},$$ which implies $H_l^j(f)\in L(\mathbb{R})$.

We list three properties of the mapping $H_l^j$ (for an arbitrary $(l,j)\in\mathbb{N}\times\mathbb{N}$) as follows:
\begin{itemize}
\item $H_l^j:C(\mathbb{R})^\mathbb{N}\to L(\mathbb{R})$ is equivariant;
\item $H_l^j:C(\mathbb{R})^\mathbb{N}\to L(\mathbb{R})$ is continuous;
\item $\supp(\mathcal{F}(H_l^j(f)))\subset\supp(\mathcal{F}(f_l))$ for any $f=(f_l)_{l\in\mathbb{N}}\in C(\mathbb{R})^\mathbb{N}$.
\end{itemize}

\medskip

In fact, the first assertion follows from the equality $$H_l^j(\sigma_rf)(t)=\frac12\int_t^{t+q_j}f_l(s+r)ds=\frac12\int_{t+r}^{t+r+q_j}f_l(s)ds=\sigma_rH_l^j(f)(t)$$ for all $f=(f_l)_{l\in\mathbb{N}}\in C(\mathbb{R})^\mathbb{N}$ and all $t,r\in\mathbb{R}$.

To verify the second assertion, we fix a compact interval $[a,b]\subset\mathbb{R}$ arbitrarily. We take a sequence of functions $g^{(i)}=(g^{(i)}_l)_{l\in\mathbb{N}}\in C(\mathbb{R})^\mathbb{N}$ converging to $h=(h_l)_{l\in\mathbb{N}}\in C(\mathbb{R}^\mathbb{N})$ uniformly on the interval $[a,b+1]$ as $i\to+\infty$. This implies that $g^{(i)}_l\in C(\mathbb{R})$ converges to $h_l\in C(\mathbb{R})$ uniformly on $[a,b+1]$ as $i\to+\infty$. It follows that the sequence of functions $H_l^j(g^{(i)})\in L(\mathbb{R})$ converges to $H_l^j(h)\in L(\mathbb{R})$ uniformly on the interval $[a,b]$ as $i\to+\infty$. This shows the continuity of $H_l^j$.

To see the third property, it suffices to observe that $$2H_l^j(f)=\chi_{[-q_j,0]}\ast f_l,$$ where $\chi_{[-q_j,0]}:\mathbb{R}\to\{0,1\}$ is defined by $$\chi_{[-q_j,0]}(t)=\begin{cases}1,\quad&t\in[-q_j,0]\\0,\quad&t\notin[-q_j,0]\end{cases}.$$

\medskip

The above statements allow us to define a continuous and equivariant mapping $$\Phi_4:\prod_{m=0}^{+\infty}\prod_{n=0}^m\mathcal{B}\left(\left[-\frac{n}{2}-1,-\frac{n}{2}\right]\cup\left[\frac{n}{2},\frac{n}{2}+1\right]\right)$$$$\to\prod_{m=0}^{+\infty}\prod_{n=0}^m\left(\mathcal{B}\left(\left[-\frac{n}{2}-1,-\frac{n}{2}\right]\cup\left[\frac{n}{2},\frac{n}{2}+1\right]\right)\cap L\left(\mathbb{R}\right)\right),$$$$f=\left(f_l\right)_{l\in\mathbb{N}}\mapsto\left(H_l^j(f)\right)_{(l,j)\in\mathbb{N}\times\mathbb{N}}.$$

Note here that we have reenumerated the indices of the product spaces (both the former and the latter, respectively) in the definition of the mapping $\Phi_4$. More precisely, we need regard the former product space as a compact invariant subset of $C(\mathbb{R})^\mathbb{N}$, with a single index $l$ ranging over $\mathbb{N}$ instead, to which we apply the mapping $\Phi_4$ (as we wrote in the ``$\mapsto$'' line), and reenumerate the indices $(l,j)\in\mathbb{N}\times\mathbb{N}$ of the resulting product space, which is considered finally as the latter.

\medskip

To show that $\left(\prod_{m=0}^{+\infty}\prod_{n=0}^m\mathcal{B}([-n/2-1,-n/2]\cup[n/2,n/2+1]),\mathbb{R},\sigma\right)$ can be embedded in $\left(\prod_{m=0}^{+\infty}\prod_{n=0}^m\left(\mathcal{B}([-n/2-1,-n/2]\cup[n/2,n/2+1])\cap L(\mathbb{R})\right),\mathbb{R},\sigma\right)$ with the mapping $\Phi_4$, we have (by compactness) just its injectivity left over.

In fact, we take $g=(g_l)_{l\in\mathbb{N}},h=(h_l)_{l\in\mathbb{N}}\in C(\mathbb{R})^\mathbb{N}$ and assume $g\ne h$. Without loss of generality there exist $p\in\mathbb{N}$ and two real numbers $a<b$ such that $g_p(s)>h_p(s)$ for all $s\in[a,b]$. We choose $t\in\mathbb{R}$ and $j\in\mathbb{N}$ such that $a<t<t+q_j<b$. This results in $H_p^j(g)(t)>H_p^j(h)(t)$ which implies $\Phi_4(g)\ne\Phi_4(h)$. Therefore $\Phi_4$ is injective.

\medskip

So we have shown what we stated in the title of this subsection. Thus, the whole proof is eventually completed.

\medskip

\section{Further remarks}
This section aims to make a clearer picture of universal properties (for those function spaces mentioned in the previous sections) with a finite sequence of short remarks clarifying some potential deliberation.
\begin{remark}
We clarify that under the translation $\sigma$ the space $$\prod_{n=0}^{+\infty}\mathcal{B}([-n/3-1/2,-n/3]\cup[n/3,n/3+1/2])$$ is not universal. Furthermore, we may prove a variant of the Bebutov--Kakutani dynamical embedding theorem similar to Theorem \ref{thm:gjt} as follows: Let $0<\alpha<\beta$ be two real numbers. A real flow $(X,\mathbb{R},T)$ can be embedded in $$\left(\prod_{n=0}^{+\infty}\mathcal{B}([-n\alpha-\beta,-n\alpha]\cup[n\alpha,n\alpha+\beta]),\mathbb{R},\sigma\right)$$ if and only if the set of its fixed points $\{x\in X:T_tx=x,\,\forall t\in\mathbb{R}\}$ can be (topologically) embedded in $[0,1]$. We give a sketch of a proof: For the ``if'' part ``$\Longleftarrow$'' we employ Theorem \ref{thm:gjt} and follow the argument in the proof of the first main theorem, which allows us to embed $\left(L(\mathbb{R}),\mathbb{R},\sigma\right)$ in $\left(\prod_{n=0}^{+\infty}\mathcal{B}([-n\alpha-\beta,-n\alpha]\cup[n\alpha,n\alpha+\beta]),\mathbb{R},\sigma\right)$. For the ``only if'' part ``$\Longrightarrow$'' we note that for every positive integer $n$ the fixed-point set of the real flow $\left(\mathcal{B}([-n\alpha-\beta,-n\alpha]\cup[n\alpha,n\alpha+\beta]),\mathbb{R},\sigma\right)$ contains only one element, i.e. the constant function $0$, while for $n=0$ its fixed-point set consists of all constant functions $\mathbb{R}\to[-1,1]$, which is homeomorphic to $[0,1]$.
\end{remark}
\begin{remark}
We indicate that under the translation $\sigma$ the space $\mathcal{B}([-1,1])^\mathbb{N}$ is not universal. The outline of a proof is similar to our second main theorem. We take a (non-constant) bounded continuous function on $\mathbb{R}$ having a sufficiently small (positive) fundamental period, and consider its image in the space $\mathcal{B}([-1,1])^\mathbb{N}$, under an embedding mapping (if we assume the embeddability). Applying the sampling lemma (Lemma \ref{samplingtheorem}) we will deduce that only constant functions in $\mathcal{B}([-1,1])$ may attain such a small period (and thus, by equivariance, are the only possible candidates for the embedding image), which contradicts the injectivity of an embedding mapping. However, we notice that $\left(\mathcal{B}([-1,1])^\mathbb{N},\mathbb{Z},\sigma_1\right)$ is universal for $\mathbb{Z}$-actions. This fact follows from the interpolation lemma (Lemma \ref{interpolatingtheorem}).
\end{remark}
\begin{remark}
In this remark we prove the corollary: Under the translation $\sigma$ the space $\left(C^\infty(\mathbb{R})\cap L(\mathbb{R})\right)^\mathbb{N}$ is universal. This corollary follows from Theorem \ref{main}. In fact, a generalized Paley--Wiener theorem \cite[Lemma 2.2]{GT}\cite[Chapter 7, Section 8]{Schwartz} asserts that a bounded continuous function $f:\mathbb{R}\to\mathbb{C}$ satisfies $\supp(\mathcal{F}(f))\subset[-r,r]$ for some $r>0$ if and only if $f$ can be extended to a holomorphic function on $\mathbb{C}$ such that $|f(x+y\sqrt{-1})|\le e^{2\pi r|y|}\cdot|\!|f|\!|_{L^\infty(\mathbb{R})}$. Thus, for any positive real number $r$ we have in particular that all the functions in $\mathcal{B}([-r,r])$ must be analytic, and hence the space $\mathcal{B}([-r,r])\cap L(\mathbb{R})$ is a compact invariant subset of $C^\infty(\mathbb{R})\cap L(\mathbb{R})$.
\end{remark}
\begin{remark}
It is also possible to give a direct proof of the corollary. We describe a sketch of the proof as follows. As we mentioned in Section 5, it suffices to build a mapping $\Theta:C(\mathbb{R})^\mathbb{N}\to\left(C^\infty(\mathbb{R})\cap L(\mathbb{R})\right)^\mathbb{N}$ which is equivariant, continuous and injective. In order to make all the functions in $C(\mathbb{R})$ infinitely differentiable, we take a function $\theta$ on $\mathbb{R}$ as follows: $$\theta(t)=\begin{cases}c\cdot e^{1-t^2},&|t|<1\\0,&|t|\ge1\end{cases}$$ where the constant $c$ should be chosen to ensure $$\int_{-\infty}^{+\infty}\theta(t)\mathrm{d}t=1.$$ It is clear that $\theta$ is a nonnegative and smooth function on $\mathbb{R}$, supported in the compact interval $[-1,1]$, and it satisfies $\theta(t)=\theta(-t)$ for all $t\in\mathbb{R}$. For each $n\in\mathbb{N}$ we set $\theta_n(t)=n\theta(nt)$ for any $t\in\mathbb{R}$. Note that every $\theta_n$ has almost the same properties as $\theta$'s, but is supported in $[-n,n]$. Moreover, we can show that for any $f\in C(\mathbb{R})$ and $n\in\mathbb{N}$ the function $f\ast\theta_n$ is smooth, and for any $N\in\mathbb{N}$ we have $|\!|f\ast\theta_n-f|\!|_{L^\infty([-N,N])}\to0$ as $n\to\infty$. To conclude we apply the construction with the argument in Section 5 (twice) to the following diagram: $$C(\mathbb{R})^\mathbb{N}\longrightarrow C^\infty(\mathbb{R})^\mathbb{N}\longrightarrow\left(C^\infty(\mathbb{R})\cap L(\mathbb{R})\right)^\mathbb{N}.$$ More precisely, the former arrow corresponds to the embedding mapping $$(f_i)_{i\in\mathbb{N}}\mapsto\left(\left(f_i\ast\theta_j\right)_{i=1}^j\right)_{j=1}^{+\infty}$$ while the latter arrow corresponds to the embedding mapping which we constructed in (the final step of) Section 5.
\end{remark}
\begin{remark}
We would like to remark here that the following refinement of our main theorem is impossible. Let $C^\infty_c(\mathbb{R})$ be the set of all functions $f\in C^\infty(\mathbb{R})$ supported in a compact subset of $\mathbb{R}$. It is classically known that a continuous function can be written as a limit (in some sense) of a sequence of functions chosen in $C^\infty(\mathbb{R})$; and further, it is also feasible to require an approximation sequence of functions coming from $C^\infty_c(\mathbb{R})$. This fact leads naturally to a seemingly plausible question as follows:
\begin{itemize}
\item Is the space $\left(L(\mathbb{R})\cap C^\infty_c(\mathbb{R})\right)^\mathbb{N}$ universal under the translation $\sigma$?
\end{itemize}
However, it turns out that we cannot expect this space to be universal under the translation. In fact, such a space is (very) non-interesting for embedding. For example, if we choose a real flow possessing at least two distinct fixed points, then its (embedding) image must contain a nonzero constant function which thus does not have a compact support.
\end{remark}

\bigskip

\section*{Appendix}
The appendix is logically independent of the body of this paper. The only ingredient is to point out that the injectivity of an embedding mapping appearing in the Bebutov--Kakutani dynamical embedding theorem (in relation to Theorem \ref{thm:gjt} affiliated to Section 3) can be observed from finitely many points in the real line provided the phase space is finite dimensional and possesses no periodic points.

\medskip

In 1981 Takens established a well-known theorem in differential dynamical systems, a variant of which we may obtain within a Baire category framework essentially the same as the classical method in this direction.
\begin{itemize}\item
Suppose that $(X,\mathbb{R},T)$ is a real flow containing no periodic points. If $\dim(X)=d<+\infty$ then for every $(2d+1)$ distinct real numbers $r_0,r_1,\dots,r_{2d}$ there exists an embedding mapping $F:X\to C(\mathbb{R})$ which embeds $(X,\mathbb{R},T)$ in the translation on $C(\mathbb{R})$ and satisfies that for any two distinct points $x,x^\prime\in X$ there is some integer $0\le n\le2d$ such that $F(x)(r_n)\ne F(x^\prime)(r_n)$.
\end{itemize}

\medskip

This is an immediate consequence of the following statement:
\begin{itemize}\item
Suppose that $(X,\mathbb{R},T)$ is a real flow containing no periodic points. If $\dim(X)=d<+\infty$ then for every $(2d+1)$ distinct real numbers $r_0,r_1,\dots,r_{2d}$ there exists a continuous mapping $f:X\to[0,1]$ such that $$X\to[0,1]^{2d+1},\quad x\mapsto(f(T_{r_0}x),f(T_{r_1}x),\dots,f(T_{r_{2d}}x))$$ is a topological embedding mapping.
\end{itemize}

\medskip

We sketch the outline of its proof. For a more detailed and technical treatment we refer to \cite{Appendixcites1,Appendixcites2}. We denote $\Delta_X=\{(x,x):x\in X\}$. The strategy is to find for every pair $(x,x^\prime)\in X\times X\setminus\Delta_X$ an open neighbourhood $U_{(x,x^\prime)}\subset X\times X\setminus\Delta_X$ of $(x,x^\prime)$ satisfying that the set $$D_{U_{(x,x^\prime)}}^{r_0,\dots,r_{2d}}=\{f\in C(X,[0,1]):f_{r_0,\dots,r_{2d}}(y)\ne f_{r_0,\dots,r_{2d}}(y^\prime),\,\forall(y,y^\prime)\in\overline{U_{(x,x^\prime)}}\}$$ is open (which is easier and which we do not plan to explain here) and dense (which is harder and for which we will put an explanation in a moment) in the space $C(X,[0,1])$, where the continuous mapping $f_{r_0,\dots,r_{2d}}$ is defined by $$f_{r_0,\dots,r_{2d}}:X\to[0,1]^{2d+1},\quad x\mapsto(f(T_{r_0}x),f(T_{r_1}x),\dots,f(T_{r_{2d}}x)).$$ Since $\{U_{(x,x^\prime)}:(x,x^\prime)\in X\times X\setminus\Delta_X\}$ is an open cover of $X\times X\setminus\Delta_X$ which is a Lindel\"of space (namely, any of its open covers admits a countable subcover), there is a countable open cover $\{U_{(x_m,x^\prime_m)}:m\in\mathbb{N}\}$ of $X\times X\setminus\Delta_X$. By the Baire category theorem there exists a continuous mapping $f\in\bigcap_{m\in\mathbb{N}}D_{U_{(x_m,x^\prime_m)}}^{r_0,\dots,r_{2d}}$. Thus, the mapping $f_{r_0,\dots,r_{2d}}:X\to[0,1]^{2d+1}$ is injective.

To make $D_{U_{(x,x^\prime)}}^{r_0,\dots,r_{2d}}$ dense in $C(X,[0,1])$, let us suppose $x^\prime\ne T_tx$ for all $t\in\mathbb{R}$. We notice that the case $x^\prime=T_tx$ for some $t\in\mathbb{R}\setminus\{0\}$ is highly similar to what we assumed here, but should be with a more careful construction of perturbations. We take open neighbourhoods $U_x$ and $U_{x^\prime}$ of $x$ and $x^\prime$, respectively, such that the sets $T_{r_0}(\overline{U_x}),\dots,T_{r_{2d}}(\overline{U_x})$, $T_{r_0}(\overline{U_{x^\prime}}),\dots,T_{r_{2d}}(\overline{U_{x^\prime}})$ are pairwise disjoint, and set $U_{(x,x^\prime)}=U_x\times U_{x^\prime}$. So we have already defined $D_{U_{(x,x^\prime)}}^{r_0,\dots,r_{2d}}$. Now we fix $f\in C(X,[0,1])$ and $\epsilon>0$ arbitrarily. Noting that $\dim(X)=d<+\infty$ we need choose finite open covers $\alpha_x$ and $\alpha_{x^\prime}$ of $\overline{U_x}$ and $\overline{U_{x^\prime}}$, respectively, which are sufficiently fine, such that $\mathrm{diam}(f(T_{r_n}(V)))<\epsilon/2$ for each $V\in\alpha_x\cup\alpha_{x^\prime}$ and each integer $0\le n\le2d$. Let $w\in\{x,x^\prime\}$. We take a partition of unity $\{\psi_V^w\}_{V\in\alpha_w}$ of $\overline{U_w}$ subordinate to $\alpha_w$, namely, a family of continuous functions $\psi_V^w:\overline{U_w}\to[0,1]$ satisfying: $$\sum_{V\in\alpha_w}\psi_V^w(z)=1,\quad\forall\,z\in\overline{U_w};\quad\quad\mathrm{supp}(\psi_V^w)\subset V,\quad\forall\,V\in\alpha_w.$$ Without loss of generality we can choose pairwise distinct points $p_V^w\in V$, for every $V\in\alpha_w$ and for $w\in\{x,x^\prime\}$, with $\psi_V^w(p_V^w)=1$. We set for any $V\in\alpha_w$ a vector $$u_V^w=\left(f(T_{r_0}p_V^w),\dots,f(T_{r_{2d}}p_V^w)\right)\;\in[0,1]^{2d+1}.$$ We need find $q_V^w\in[0,1]^{2d+1}$, for each $V\in\alpha_w$, with $|\!|q_V^w-u_V^w|\!|_\infty<\epsilon/2$, such that any $(2d+2)$ pairwise distinct vectors in the family $\{q_V^w:V\in\alpha_w,w\in\{x,x^\prime\}\}$ are affinely independent in $\mathbb{R}^{2d+1}$. Next we define a continuous mapping $$k_w:\overline{U_w}\to[0,1]^{2d+1},\quad z\mapsto\sum_{V\in\alpha_w}\psi_V^w(z)q_V^w.$$ We put $W=\bigcup_{n=0}^{2d}\bigcup_{w\in\{x,x^\prime\}}T_{r_n}(\overline{U_w})$ and let $g:W\to[0,1]$ be a continuous function defined by $g(T_{r_n}z)=\mathsf{proj}_n(k_w)(z)$ for any integer $0\le n\le2d$ and any $z\in\overline{U_w}$, where $w$ ranges over $\{x,x^\prime\}$. We may verify $|\!|g-f|_W|\!|_\infty<\epsilon$. Finally it suffices to extend $g:W\to[0,1]$ to a continuous function $h:X\to[0,1]$ (i.e. $h|_W=g$) with $|\!|f-h|\!|_\infty<\epsilon$, and to show $h\in D_{U_{(x,x^\prime)}}^{r_0,\dots,r_{2d}}$, as required.

\bigskip

\medskip

\end{document}